\def \C {\mathbb C}
\def \R {\mathbb R}
\def \Z {\mathbb Z}
\def\cB{\mathcal{B}}
\def\cC{\mathcal{C}}
\def\cD{\mathcal{D}}
\def\cF{\mathcal{F}}
\def\cH{\mathcal{H}}
\def\cK{\mathcal{K}}
\def\cL{\mathcal{L}}
\def\cS{\mathcal{S}}
\def\cU{\mathcal{U}}
\def\vareps{\varepsilon}
\def\vp{\mathbf{p}}
\def\vx{\mathbf{x}}
\def\vy{\mathbf{y}}
\newcommand{\expect}[1]{\ensuremath{\mathbf{E}\left(#1\right)}}
\newcommand{\cov}[2]{\ensuremath{\mathbf{Cov}\big(#1,#2\big)}}
\newcommand{\condprob}[2]{\ensuremath{\mathbf{P}\big(#1\bigm|#2\big)}}
\newcommand{\condexpect}[2]{\ensuremath{\mathbf{E}\big(#1\bigm|#2\big)}}
\newcommand{\ind}[1]{\ensuremath{{1\!\!1}_{\{#1\}}}}
\def\ordo{o}
\def\one{1\!\!1}
\def\grad{\mathrm{grad}\,}
\DeclareMathOperator*{\Perm}{Perm}
\renewcommand{\d}{\mathrm d}
\newcommand{\abs}[1]{\left|\,{#1}\,\right|}
\newcommand{\norm}[1]{\left\|\,{#1}\,\right\|}
\def \wt {\widetilde}
\def\ol{\overline}
\def\wh{\widehat}
\newtheorem {theorem}{Theorem}
\newtheorem {lemma}{Lemma}
\newtheorem {corollary}{Corollary}
\newtheorem {proposition}{Proposition}
\newtheorem* {theorem*}{Theorem}
\newtheorem* {thm*}{Theorem}
\newtheorem* {lemma*}{Lemma}
\newtheorem* {lem*}{Lemma}
\newtheorem* {corollary*}{Corollary}
\newtheorem* {cor*}{Corollary}
\newtheorem* {proposition*}{Proposition}
\newtheorem* {prop*}{Proposition}
\newtheorem* {definition*}{Definition}
\newtheorem* {def*}{Definition}
\newtheorem* {conjecture*}{Conjecture}
\newtheorem* {remark*}{Remark}
\newtheorem* {rem*}{Remark}
\theoremstyle{definition}
\newtheorem*{remarks}{Remarks}
\newtheorem*{remark}{Remark}
\newtheorem*{ack}{Acknowledgement}
\def\be{\begin{equation}}
\def\ee{\end{equation}}
\def\bea{\begin{eqnarray}}
\def\eea{\end{eqnarray}}
\newcommand{\wick}[1]{\ensuremath{:\!\! #1 \!\!:\,}}
\title{Diffusive limits for ``true'' (or myopic) self-avoiding random walks and self-repellent Brownian polymers in $d\ge3$}
\author{
{\sc Ill\'es Horv\'ath} \qquad {\sc B\'alint T\'oth} \qquad {\sc B\'alint Vet\H o}
\\
Institute of Mathematics, Budapest University of Technology
\\
Egry J\'ozsef u.\ 1, Budapest, H-1111, Hungary
\\
email: {\tt \{pollux,balint,vetob\}@math.bme.hu}
}
\begin{document}

\maketitle

\begin{abstract}
The problems considered in the present paper have their roots in two different
cultures. The \emph{`true' (or myopic) self-avoiding walk} model (TSAW) was
introduced in the physics literature by Amit, Parisi and Peliti in
\cite{amit_parisi_peliti_83}. This is a nearest neighbor non-Markovian random
walk in $\Z^d$ which prefers to jump to those neighbors which were less visited
in the past. The \emph{self-repelling Brownian polymer} model (SRBP), initiated
in the probabilistic literature by Durrett and Rogers in
\cite{durrett_rogers_92} (independently of the physics community), is the
continuous space-time counterpart: a diffusion in $\R^d$ pushed by the negative
gradient of the (mollified) occupation time measure of the process. In both
cases, similar long memory effects are caused by a path-wise self-repellency of
the trajectories due to a push by the negative gradient of (softened) local
time.

We investigate the asymptotic behaviour of TSAW and SRBP in the non-recurrent dimensions. First, we identify a natural stationary (in time) and ergodic distribution of the environment (the local time profile) as seen from the
moving particle. The main results are diffusive limits. In the case of TSAW, for a wide class of self-interaction functions, we establish diffusive lower and upper bounds for the displacement and for a particular, more  restricted
class of interactions, we prove full CLT for the finite dimensional distributions of the displacement. In the case of SRBP, we prove full CLT without restrictions on the interaction functions.

These results settle part of the conjectures, based on non-rigorous renormalization group arguments (equally 'valid' for the TSAW and SRBP cases), in \cite{amit_parisi_peliti_83}. The proof of the CLT follows the non-reversible version of Kipnis\,--\,Varadhan theory. On the way to the proof,
we slightly weaken the so-called \emph{graded sector condition}.

\medskip\noindent
{\sc MSC2010:} 60K37, 60K40, 60F05, 60J55

\medskip\noindent
{\sc Key words and phrases:}
self-repelling random motion, local time, central limit theorem
\end{abstract}

\section{Introduction and background}
\label{s:intro}

We consider two phenomenologically very similar, but, on a technical level, not
quite identical models of self-repelling random motion: the ``true'' (or
myopic) self-avoiding random walk (TSAW) on the integer lattice $\Z^d$ and the
self-repelling Brownian polymer (SRBP) in $\R^d$. Due to this, most sections
and subsections of this paper have two sub-units: one devoted to the TSAW and
another one to SRBP. The SRBP is technically similar to a particular choice of
the self-interaction function in TSAW, which we call the Gaussian case. We try
to avoid overlaps and repetitions of technical arguments. Hence, the proofs are
complete and self-contained for the TSAW model, but not for the SRBP. In this
second case, we mostly just sketch the proofs. A fully self-contained version
is available online in \cite{horvath_toth_veto_09}.

Due to this parallelism the notation of the paper has some peculiarity: we use
the same notation for analogous objects in the two cases. E.g.\ $X(t)$ will
denote the displacement of both random motions, $G$ will denote the
infinitesimal generator of the underlying main Markov processes in both cases,
etc. But we think that there is no danger of confusion, since the two models
are treated in separate subunits and it is always clear from the context which
model is under investigation. Further on, some particular symbols are used
``abusively'': $\nabla$ and $\Delta$ denote lattice gradient and Laplacian, but
also gradients and Laplacians acting on the Hilbert spaces $\cL^2(\Omega, \pi)$
where $(\Omega,\pi)$ is an appropriate probability space on which translations
act ergodically. The meaning of these symbols will be always clear from the
context. The main structure of the paper is as follows: In Section 1, (the rest
of the present section) we define the models, sketch the main problems and
their history. In Section 2, we make the necessary formal preparations and
formulate the main results of the paper. In Section 3, the appropriate
functional analytic formalism is prepared: the suitable $\cL^2$ Hilbert spaces
and the relevant linear operators are introduced and partly analyzed. In
Section 4, we give a survey of the non-reversible Kipnis\,--\,Varadhan theory
of central limit behaviour of additive functionals of ergodic Markov processes,
including a slight improvement in the so-called graded sector condition.
Section 5 contains the so-called $H_{-1}$-bounds (that is: diffusive bounds)
for the compensators appearing in the decomposition of displacement of the
random motions investigated. Finally, in Section 6, the graded sector condition
is verified.

\subsection{TSAW}
\label{ss:background_TSAW}

Let $w:\R\to (0,\infty)$ be a fixed smooth ``rate function'' for which
\begin{align}
\label{ellipticity}
\inf_{u\in\R}w(u):=\gamma>0,
\end{align}
and denote by $s$ and $r$ its even, respectively, odd part:
\begin{align}
\label{evenodd}
s(u):=\frac{w(u)+w(-u)}2 -\gamma,
\qquad
r(u):=\frac{w(u)-w(-u)}2.
\end{align}
Beside \eqref{ellipticity}, we make the following assumptions: there exist constants $c>0$, $\vareps>0$ and $C<\infty$ such that
\begin{gather}
\label{convexity} \inf_{u\in\R}r^{\prime}(u)> c,
\\
\label{s_small} s(u)< C\exp\{(c-\vareps) u^2/2\},
\end{gather}
and, finally, we make the technical assumption that $r(\cdot)$ is an entire function which satisfies:
\begin{align}
\label{r_entire}
\sum_{n=0}^\infty \left(\frac2{c}\right)^{n/2} \abs{r^{(n)}(0)}<\infty.
\end{align} Condition \eqref{ellipticity} is \emph{ellipticity} which ensures that the jump rates of the random walk considered are \emph{minorated} by an ordinary simple symmetric walk. Condition \eqref{convexity} ensures sufficient self-repellence of the trajectories and sufficient log-convexity of the stationary measure identified later. Conditions \eqref{s_small} and \eqref{r_entire} are of technical nature and their role will be clarified later.

Let $t\mapsto X(t)\in\Z^d$ be a continuous time nearest neighbor jump process on the integer lattice $\Z^d$ whose law is given as follows:
\begin{align}
\label{law}
\condprob{X(t+\d t)=y} {\cF_t,X(t)=x}= \ind{\abs{x-y}=1}
w(\ell(t,x)-\ell(t,y))\,\d t +\ordo(\d t)
\end{align}
where
\begin{equation}
\ell(t,z) := \ell(0,z) + \abs{ \{0\le s\le t: X(s)=z\} }
\qquad
z\in\Z^d
\end{equation}
is the occupation time measure of the walk $X(t)$ with some initial values $\ell(0,z)\in\R$, $z\in\Z^d$.  This is a continuous time version of the \emph{`true' self-avoiding random walk} defined in \cite{amit_parisi_peliti_83}.

Non-rigorous (but nevertheless convincing) scaling and renormalization group arguments suggest the following dimension-dependent asymptotic scaling behaviour (see e.g.\ \cite{amit_parisi_peliti_83}, \cite{obukhov_peliti_83},
\cite{peliti_pietronero_87}):

\begin{enumerate}[--]

\item
In $d=1$: $X(t)\sim t^{2/3}$ with intricate, non-Gausssian scaling limit.

\item
In $d=2$: $X(t)\sim t^{1/2}(\log t)^{\zeta}$ and Gaussian (that is Wiener)
scaling limit expected. (We note that actually there is some controversy in the
physics literature about the value of the exponent $\zeta$ in the logarithmic
correction.)

\item
In $d\ge3$: $X(t)\sim t^{1/2}$ with Gaussian (i.e.\ Wiener) scaling limit
expected.
\end{enumerate}

In $d=1$, for some particular cases of the model (discrete time TSAW with edge, rather than site repulsion and continuous time TSAW with site repulsion, as defined above), the limit theorem for $t^{-2/3}X(t)$ was established in \cite{toth_95}, respectively, \cite{toth_veto_09} with the truly intricate
limiting distribution identified. The scaling limit of the \emph{process} $t\mapsto N^{-2/3}X(Nt)$ was constructed and analyzed in \cite{toth_werner_98}.

In $d=2$, very little is proved rigorously. For the isotropic model exposed
above, we expect the value $\zeta=1/4$ in the logarithmic correction. For a
modified, anisotropic  version of the model where self-repulsion acts only in
one spatial (say, the horizontal) direction, the exponent $\zeta=1/3$ is
expected and the lower bound $\varliminf_{t\to\infty} t^{-1}(\log t)^{-1/2}
\expect{X(t)^2}>0$ is actually proved, cf.\ \cite{valko_09}.

In the present paper, we address the $d\ge3$ case.

\subsection{SRBP} \label{ss:background_SRBP}

Let $V:\R^d\to\R_+$ be an approximate identity, that is a smooth ($C^{\infty}$), spherically symmetric function with sufficiently fast decay at infinity (exponential decay certainly suffices), and
\begin{equation}
\label{FisgradV}
F:\R^d\to\R^d,
\qquad
F(x):=-\grad \, V(x).
\end{equation}
For reasons which will become clear later, we also impose the condition of \emph{positive definiteness} of $V$:
\begin{equation}
\label{Vposdef}
\wh V(p):=(2\pi)^{-d/2}\int_{\R^d} e^{i p\cdot x}V(x) \,\d x
\ge0.
\end{equation}
A particular choice could be $V(x):=\exp\{-\abs{x}^2\}$.

Let $t\mapsto B(t)\in\R^d$ be standard $d$ dimensional Brownian motion and
define the stochastic process $t\mapsto X(t)\in\R^d$ as the solution of the SDE
\begin{equation}
\label{Brpoly}
X(t)=  B(t)+\int_0^t\int_0^s F(X(s)-X(u))\,\d u\,\d s,
\end{equation}
or
\begin{equation}
\label{Brpolydiff} \d X(t)= \d B(t)+ \left(\int_0^t F(X(t)-X(u))\,\d u\right)\d
t.
\end{equation}

\begin{remark}
Other types of self-interaction functions $F$ in \eqref{Brpoly} and
\eqref{Brpolydiff} may give rise to various different asymptotics. For the very
few rigorous results (mostly in 1d), see \cite{norris_rogers_williams_87},
\cite{durrett_rogers_92}, \cite{cranston_lejan_95},
\cite{cranston_mountford_96} and in particular \cite{mountford_tarres_08} which
also contains a survey of the earlier results. Recent 1d results appear in
\cite{tarres_toth_valko_09}.
\end{remark}

Now, introducing the occupation time measure
\begin{equation}
\ell(t, A):=\ell(0,A) + \abs{\{0<s\le t: X(s)\in A\}}
\end{equation}
where $A\subset \R^d$ is any measurable domain, and $\ell(0,A)$ is some signed initialization, we can rewrite the SDE \eqref{Brpolydiff} as follows:
\begin{equation}
\label{Brpolydiff2} \d X(t)= \d B(t) - \grad \big(V*\ell
(t,\cdot)\big)(X(t))\,\d t
\end{equation}
where $*$ stands for convolution in $\R^d$. We assume that $\ell(0,A)$ is a signed Borel measure on $\R^d$ with slow increase: for any $\varepsilon>0$
\begin{equation}
\label{slowincreaseinitially}
\lim_{N\to\infty} N^{-(d+\vareps)}\abs{\ell}(0,[-N,N]^d)=0.
\end{equation}
The form \eqref{Brpolydiff2} of the driving mechanism, compared with
\eqref{law}, shows explicitly the phenomenological similarity of the two
models. The non-rigorous scaling and renormalization group arguments mentioned
earlier in the context of TSAW apply equally well to the SRBP, and thus, the
same dimension dependent behaviour is expected to hold. Beside the earlier
results for TSAW listed in the previous subsection, we should mention that
recently, a robust proof was given for the super-diffusive behaviour of the 1d
models: in \cite{tarres_toth_valko_09}, inter alia, it is proved that, for the
$1d$ SRBP models, $\varliminf_{t\to\infty}t^{-5/4}\expect{X(t)^2}>0$ and
$\varlimsup_{t\to\infty}t^{-3/2}\expect{X(t)^2}<\infty$. These are robust
super-diffusive  bounds (not depending on microscopic details), but still far
from the expected $t^{2/3}$ scaling.

\section{Formal setup and results} \label{s:setup}

For both problems, we consider the $d\ge3$ cases. First, we identify a rather
natural stationary and ergodic (in time) distribution of the environment
(essentially: the local time profile) as seen from the position of the moving
point. In this particular stationary and ergodic regime, we prove diffusive
(that is of order $t$) bounds on the variance of $X(t)$ and  \emph{diffusive
limit} (that is non-degenerate CLT with normal scaling) for the displacement.

\subsection{TSAW} \label{ss:setup_results_TSAW}

It is natural to consider the local time profile as seen from the position of the random walker
\begin{align}
\label{etadef}
\eta(t) = \big( \eta(t,x) \big)_{x\in\Z^d}
\qquad
\eta(t,x):=\ell(t,X(t)+x).
\end{align}
It is obvious that $t\mapsto\eta(t)$ is a c.a.d.l.a.g.\ Markov process on the
state space
\begin{align}
\label{OmegaTSAW} \Omega:=\{\omega=\big(\omega(x)\big)_{x\in\Z^d}\,:\,
\omega(x)\in\R,\,\,\, (\forall \vareps>0) \lim_{\abs{x}\to\infty}
\abs{x}^{-\vareps} \abs{\omega(x)}=0\}.
\end{align}
Note that we allow initial values $\ell(0,x)\in\R$ for the occupation time measure and thus $\ell(t,x)$ need not be non-negative. The group of spatial shifts
\begin{align}
\label{shifts} \tau_z:\Omega\to\Omega, \qquad \tau_z\omega(x):=\omega(z+x),
\qquad z\in\Z^d
\end{align}
acts naturally on $\Omega$.

Let
\begin{align}
\cU:=\{e\in\Z^d: \abs{e}=1\}.
\end{align}
Throughout the paper, we will denote by $e$ the $2d$ unit vectors from $\cU$
and by $e_l$, $l=1,\dots,d$, the unit vectors pointing in the positive
coordinate directions.

The infinitesimal generator of the process $t\mapsto\eta(t)$, defined for \emph{smooth cylinder functions} $f:\Omega\to\R$, is
\begin{align}
\label{infgen} Gf(\omega) = \sum_{e\in\cU} w(\omega(0)-\omega(e))
\big(f(\tau_e\omega)-f(\omega)\big) + \cD f(\omega)
\end{align}
where the (unbounded) linear operator
\begin{align}
\label{partial_op} \cD f(\omega) := \frac{\partial f}{\partial
\omega(0)}(\omega)
\end{align}
is well-defined for smooth cylinder functions.

The meaning of the various terms on the right-hand side of \eqref{infgen} is clear: the terms in the sum are due to the random shifts of the environment caused by the jumps of the random walker while the last term on the right-hand side is due to the deterministic linear growth of local time at the site actually occupied by the random walker.

Next, we define a probability measure on $\Omega$ which will turn out to be
stationary and ergodic for the Markov process $t\mapsto\eta(t)$. Let
\begin{align}
\label{R}
R:\R\to[0,\infty), \qquad R(u):=\int_0^ur(v)\,\d v.
\end{align}
$R$ is strictly convex and even. We denote by $\d\pi(\omega)$ \emph{the unique
centered Gibbs measure} (Markov field) on $\Omega$ defined by the conditional
specifications for $\Lambda\subset \Z^d$ finite:
\begin{align}
\label{specifications}
\d\pi\left(\omega_\Lambda\bigm|\omega_{\Z^d\setminus\Lambda}\right) =
Z_{\Lambda}^{-1} \exp\left\{- \frac12 \sum_{ \stackrel {x,y\in\Lambda}
{\abs{x-y}=1} } R(\omega(x)-\omega(y)) - \sum_{ \stackrel {x\in\Lambda,
y\in\Lambda^c} {\abs{x-y}=1} } R(\omega(x)-\omega(y))
\right\}\,\d\omega_\Lambda.
\end{align}
Note that the (translation invariant) Gibbs measure given by the specifications \eqref{specifications} exists only in three and more dimensions. For information about gradient measures of this type, see \cite{funaki_05}. The measure $\d\pi$ is invariant under the spatial shifts and the dynamical system
$(\Omega, \pi, \tau_z: z\in\Z^d)$ is \emph{ergodic}.

In the particular case when $r(u)=u$, $R(u)=u^2/2$, the measure $\d
\pi(\omega)$ is the distribution of the massless free Gaussian field on $\Z^d$,
$d\ge3$ with expectations and covariances
\begin{align}
\label{mfgfcovar} \int_{\Omega} \omega(x)\,\d\pi(\omega)=0, \qquad
\int_{\Omega} \omega(x)\omega(y)\,\d\pi(\omega)=(-\Delta)^{-1}_{x,y}=:C(y-x)
\end{align}
where $\Delta$ is the lattice Laplacian. We will refer to this special setup as
\emph{the Gaussian case}.

We are ready now to formulate the results regarding the lattice model.

\begin{proposition}
\label{prop:statinarity+ergodicity}
The probability measure $\d\pi(\omega)$ is stationary and ergodic for the Markov process $t\mapsto\eta(t)\in\Omega$.
\end{proposition}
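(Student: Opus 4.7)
The plan is to verify the invariance identity $\int Gf\,d\pi=0$ on a core of smooth cylinder test functions $f$, from which stationarity of $\pi$ follows by a standard semigroup argument; ergodicity will then be deduced from the spatial ergodicity of $(\Omega,\pi,\tau_z)$ together with the irreducibility of the nearest-neighbour jump part of $G$.

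For the invariance identity I would split $G$ into its jump part $Jf(\omega)=\sum_{e\in\cU}w(\omega(0)-\omega(e))(f(\tau_e\omega)-f(\omega))$ and its drift $\cD f$, and use the decomposition $w=\gamma+s+r$ of \eqref{evenodd}. The even piece $\gamma+s$ produces a reversible contribution: by translation invariance of $\pi$ and the fact that $(\gamma+s)(\omega(0)-\omega(e))$ depends evenly on the local gradient, detailed balance with respect to $\pi$ holds, the associated operator is $\cL^2(\Omega,\pi)$-self-adjoint, and hence it integrates to zero against $\pi$. For the odd piece I would apply the change of variable $\omega\mapsto\tau_{-e}\omega$ under $\pi$ (permissible by translation invariance), together with the oddness $r(-u)=-r(u)$, to rewrite $\int \sum_{e\in\cU}r(\omega(0)-\omega(e))(f(\tau_e\omega)-f(\omega))\,d\pi$ as a multiple of $\int f(\omega)\sum_{e\in\cU}r(\omega(0)-\omega(e))\,d\pi$.

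For the drift I would integrate by parts in the $\omega(0)$-direction using the specification \eqref{specifications} on $\Lambda=\{0\}$, whose conditional density is proportional to $\exp\{-\sum_{e\in\cU}R(\omega(0)-\omega(e))\}$; its $\omega(0)$-logarithmic derivative equals $-\sum_{e\in\cU}r(\omega(0)-\omega(e))$, thanks to the identity $R'=r$ built into the definition \eqref{R} of $R$. This produces another multiple of $\int f\sum_{e\in\cU}r(\omega(0)-\omega(e))\,d\pi$, and the relation $R'=r$ is precisely the compatibility condition that makes this drift contribution cancel the residual coming from the antisymmetric jump piece, so $\int Gf\,d\pi=0$. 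Boundary terms in the integration by parts are killed by the strict log-concavity of the conditional density, which is guaranteed by the convexity bound \eqref{convexity} on $r'$ combined with the polynomial-growth restriction built into \eqref{OmegaTSAW}.

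Ergodicity follows along classical lines. If $h\in\cL^2(\Omega,\pi)$ is invariant under the semigroup generated by $G$, then the ellipticity $\gamma>0$ ensures strictly positive jump rates onto every nearest neighbour, so $h$ must agree $\pi$-almost surely with every shift $h\circ\tau_e$ and, by iteration, with every $h\circ\tau_z$, $z\in\Z^d$; the stated ergodicity of the translation action on $(\Omega,\pi)$ (classical for strictly convex gradient Gibbs measures in $d\ge3$) then forces $h$ to be $\pi$-a.s.\ constant. The main obstacle I anticipate is the careful algebraic bookkeeping in the cancellation between the antisymmetric part of $J$ and the drift $\cD$, and the justification that the integration by parts produces no boundary contribution in the infinite-volume conditional density; both points hinge on the slow-growth definition of $\Omega$ and on the exponential integrability supplied by \eqref{convexity}. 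A secondary point is to verify that smooth cylinder functions form a core for $G$, so that the invariance identity extends from the computational class to the full $\cL^2(\Omega,\pi)$-domain.
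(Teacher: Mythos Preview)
Your approach is essentially the same as the paper's: both rest on the parity of $s$ and $r$, the translation invariance of $\pi$, and the integration-by-parts identity for $\cD$ coming from $R'=r$ in the specification \eqref{specifications}; and both derive ergodicity from the Dirichlet-form lower bound $(f,Sf)\ge\gamma(f,-\Delta f)$ together with the spatial ergodicity of the shifts. The only difference is packaging: instead of verifying $\int Gf\,d\pi=0$ term by term, the paper computes the symmetric and skew-symmetric parts $S$, $A$ of $G$ as operators on $\cL^2(\Omega,\pi)$ (via the commutation relations $M_eT_e=T_eM_{-e}$, $N_eT_e=-T_eN_{-e}$ and the adjoint identity for $\cD$) and observes the Yaglom reversibility $JGJ=G^*$ with $Jf(\omega):=f(-\omega)$, from which $G^*\one=0$ follows in one line since $G\one=0$ and $J\one=\one$.
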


The law of large numbers for the displacement of the random walker drops out for free:

\begin{corollary}
\label{cor:lln}
For $\pi$-almost all initial profiles $\ell(0,\cdot)$, almost surely
\begin{align}
\label{lln}
\lim_{t\to\infty}\frac{X(t)}{t}=0.
\end{align}
\end{corollary}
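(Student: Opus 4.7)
The plan is to write $X(t)$ as a compensated martingale plus an additive functional of the environment process $\eta(t)$ and then apply Birkhoff's ergodic theorem to the additive part. Concretely, since $X(t)$ is a pure-jump process whose jumps in direction $e\in\cU$ occur at rate $w(\eta(t,0)-\eta(t,e))$, I would introduce
\begin{equation*}
\phi(\omega) := \sum_{e\in\cU} e\,w(\omega(0)-\omega(e)),
\end{equation*}
and write the semimartingale decomposition
\begin{equation*}
X(t) = M(t) + \int_0^t \phi(\eta(s))\,\d s,
\end{equation*}
where $M(t)$ is a c\`adl\`ag $\R^d$-valued martingale with respect to the natural filtration, obtained by compensating each directional jump counting process.

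For the additive part, Proposition~\ref{prop:statinarity+ergodicity} together with the fact that $\phi\in\cL^1(\pi)$ (the marginal law of $\omega(0)-\omega(e)$ under $\pi$ has at least Gaussian tails with rate $c$ by strict convexity of $R$, while the growth of $w=\gamma+s+r$ is controlled by \eqref{s_small} and \eqref{r_entire}, so the integral converges) allows me to invoke Birkhoff's ergodic theorem for the stationary ergodic Markov process $t\mapsto\eta(t)$, yielding
\begin{equation*}
\frac1t\int_0^t \phi(\eta(s))\,\d s \;\longrightarrow\; \int_\Omega \phi\,\d\pi
\qquad\text{a.s.}
\end{equation*}
for $\pi$-a.e.\ initial profile. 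The key algebraic step is to show that this limit vanishes. I would use the reflection $\sigma\colon\Omega\to\Omega$, $(\sigma\omega)(x):=\omega(-x)$: under $\sigma$ the conditional specifications \eqref{specifications} are preserved because $R$ is even and the nearest-neighbor interaction is reflection symmetric, so $\sigma_*\pi=\pi$; on the other hand, reindexing the sum gives $\phi(\sigma\omega)=-\phi(\omega)$. Hence $\int\phi\,\d\pi=-\int\phi\,\d\pi=0$.

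It remains to show $M(t)/t\to 0$ a.s. The predictable quadratic variation of $M$ satisfies $\d\langle M,M\rangle_t = A(\eta(t))\,\d t$ with $A(\omega)=\sum_{e\in\cU}w(\omega(0)-\omega(e))$, which is in $\cL^1(\pi)$ by the same integrability argument as above. Consequently $\langle M,M\rangle_t$ grows at most linearly in $t$ (again by Birkhoff applied to $A$), and the standard strong law for c\`adl\`ag martingales with linearly growing bracket, together with the bounded jump size $|\Delta M(t)|\le1$, gives $t^{-1}M(t)\to 0$ almost surely. Combining the two pieces yields \eqref{lln}.

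The only genuinely non-trivial point I anticipate is verifying the symmetry $\int\phi\,\d\pi=0$; the integrability bookkeeping and the martingale SLLN are routine once \eqref{ellipticity}--\eqref{r_entire} and Proposition~\ref{prop:statinarity+ergodicity} are in hand.
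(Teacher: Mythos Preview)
Your proof is correct and is precisely the kind of argument the paper has in mind: the paper gives no details at all, stating only that the law of large numbers ``drops out for free'' from Proposition~\ref{prop:statinarity+ergodicity} (stationarity and ergodicity), and your decomposition into a square-integrable martingale plus an ergodic average of a centered $\cL^1$ functional is the canonical way to cash this in. Note that the very decomposition you write down is the one the paper uses later in \eqref{martingale+compensator}, and the integrability of $\phi$ is implicit in the Brascamp--Lieb bound \eqref{Z-finite}; your use of the spatial reflection $\sigma$ to kill $\int\phi\,\d\pi$ is a clean alternative to arguing componentwise via translation invariance and the parity of $s$ and $r$.
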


However, the main results refer to the diffusive scaling limit of the displacement.

\begin{theorem}
\label{thm:diffusive_limit}
\newcounter{szaml1}
\begin{list}
{(\arabic{szaml1})}{\usecounter{szaml1}\setlength{\leftmargin}{1em}}

\item
If conditions \eqref{ellipticity}, \eqref{convexity}, \eqref{s_small} and \eqref{r_entire} hold for the rate function, then
\begin{align}
\label{diffusive_bounds}
0<\gamma
\le
\inf_{{\abs{e}=1}} \varliminf_{t\to\infty} t^{-1} \expect{(e\cdot X(t))^2}
\le
\sup_{{\abs{e}=1}} \varlimsup_{t\to\infty} t^{-1} \expect{(e\cdot X(t))^2}
<\infty.
\end{align}

\item
Assume that
\begin{align}
\label{polynomial}
r(u)=u,\qquad s(u)=s_4u^4+s_2u^2+s_0,
\end{align}
and we also make the technical assumption that ${s_4}/{\gamma}$ is sufficiently
small. Then the matrix of asymptotic covariances
\begin{align}
\label{asymptotic_covariances}
\sigma^2_{kl}:=\lim_{t\to\infty}t^{-1}\expect{X_k(t)X_l(t)}
\end{align}
exists and it is non-degenerate. The finite dimensional distributions of the rescaled displacement process
\begin{align}
\label{rescaled_displacement}
X_N(t) := N^{-1/2}X(Nt)
\end{align}
converge to those of a $d$ dimensional Brownian motion with covariance matrix
$\sigma^2$.

\end{list}
\end{theorem}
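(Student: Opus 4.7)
My plan is to apply the standard martingale decomposition of the displacement. Since $X(t)$ is a continuous-time jump process on $\Z^d$ with jump rates $w(\eta(t,0)-\eta(t,e))$, we have
\[
X(t) = M(t) + \int_0^t \phi(\eta(s))\,\d s,
\qquad
\phi(\omega) := \sum_{e\in\cU} e\, w(\omega(0)-\omega(e)),
\]
where $M$ is an $\R^d$-valued martingale in the filtration of the environment process, whose predictable quadratic variation in direction $\hat e$ equals $\int_0^t \sum_{e\in\cU}(e\cdot\hat e)^2 w(\eta(s,0)-\eta(s,e))\,\d s$. By stationarity of $\eta(\cdot)$ under $\pi$ (Proposition \ref{prop:statinarity+ergodicity}), this grows exactly linearly in $t$ in $L^2$ with rate bounded below by $2\gamma$ thanks to the ellipticity \eqref{ellipticity}; the functional martingale CLT applies. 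The entire content of the theorem is therefore the control of the compensator $C(t):=\int_0^t \phi(\eta(s))\,\d s$.

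For part (1), I would invoke the $H_{-1}$-bounds announced in Section 5. Using the oddness of $r$, the evenness of $s$ and the $\omega\mapsto -\omega$ symmetry of $\pi$, one has $\int\phi\,\d\pi=0$; moreover, each Cartesian component $\phi_l$ can be written as a lattice gradient of a local function, which, combined with the log-convexity of $\pi$ coming from \eqref{convexity}, places $\phi_l$ in the negative Sobolev space of the symmetric part $S$ of the generator $G$. Conditions \eqref{s_small} and \eqref{r_entire} provide the moment bounds needed to close the explicit $H_{-1}$-computation and to keep $\phi\in L^2(\pi)$. The generic Kipnis--Varadhan variance estimate then yields $\expect{(\hat e\cdot C(t))^2}\le C t$, which combined with the martingale lower bound produces \eqref{diffusive_bounds}, the lower bound being obtained after a Cauchy--Schwarz absorption of the cross-term between $M$ and $C$.

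For part (2) I would apply the non-reversible Kipnis--Varadhan machinery of Section 4. It will deliver joint convergence of $N^{-1/2}M(N\cdot)$ and $N^{-1/2}C(N\cdot)$ to a (possibly correlated) Brownian motion, provided $\phi$ satisfies the resolvent condition
\[
\lim_{\lambda\downarrow 0}\lambda\norm{(\lambda - G)^{-1}\phi}_{L^2(\pi)}^2=0,
\]
which itself is reduced to the graded sector condition. Under the polynomial assumption \eqref{polynomial}, the measure $\pi$ is the massless free Gaussian field of \eqref{mfgfcovar}, so $L^2(\Omega,\pi)=\bigoplus_{n\ge 0}\cH_n$ decomposes orthogonally into Wick--Hermite polynomial chaoses. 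The symmetric part $S$ of $G$ preserves this grading; the asymmetric part $A$, the multiplication operator $\cD$, and the multiplication by powers $(\omega(0)-\omega(e))^k$ appearing in $w$ shift the chaos degree by a bounded amount (up to $\pm 3$, the highest shift being generated by the quartic term $s_4 u^4$). The assumption that $s_4/\gamma$ is sufficiently small is precisely what permits the off-diagonal operators $G_{n,n+k}$ to be dominated perturbatively by the Dirichlet form on $\cH_n$ with constants growing at most polynomially in $n$, as required by the (slightly weakened) graded sector condition of Section 4. Non-degeneracy of $\sigma^2$ is then inherited from the martingale contribution, which by \eqref{ellipticity} dominates $2\gamma I$.

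The main technical obstacle is, as anticipated, the verification of the graded sector condition. One must write $G$ in second-quantized form on $\bigoplus_n\cH_n$, identify each off-diagonal block $G_{n,n+k}$ as a sum of products of discrete gradients and annihilation/creation operators on the Gaussian field, and bound its operator norm in terms of the diagonal Dirichlet form on $\cH_n$ with a sharp enough $n$-dependence to satisfy the graded sector bound. The quartic coefficient $s_4$ produces the largest chaos jump ($\pm 3$) and the most complicated bookkeeping of Wick contractions; only the smallness hypothesis on $s_4/\gamma$ allows the resulting perturbative estimates to close. Once this is done, Kipnis--Varadhan produces the finite-dimensional CLT, completing the proof.
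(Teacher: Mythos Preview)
Your overall strategy is right, but there are two genuine gaps in part~(1) and one important misidentification in part~(2).

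\textbf{Part (1), upper bound.} You assert that each $\phi_l$ is a lattice gradient of a local function. This is false in general. Writing $w=\gamma+s+r$ with $s$ even and $r$ odd, one has $\phi_l=\ol\varphi_l+\wt\varphi_l$ with
\[
\ol\varphi_l(\omega)=s(\omega(0)-\omega(e_l))-s(\omega(0)-\omega(-e_l)),
\qquad
\wt\varphi_l(\omega)=r(\omega(0)-\omega(e_l))-r(\omega(0)-\omega(-e_l)).
\]
Because $s$ is even, $\ol\varphi_l=\nabla_l\psi_l$ with $\psi_l(\omega)=s(\omega(0)-\omega(-e_l))$, and your argument gives the $H_{-1}$-bound for $\ol\varphi_l$. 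But since $r$ is odd, $\wt\varphi_l$ is \emph{not} a gradient unless $r$ is linear. The paper handles $\wt\varphi_l$ by a completely different route: a Brascamp--Lieb inequality (using the log-concavity from \eqref{convexity}) is iterated to bound $\sup_p|\wh C_{nn}(p)|$ for the covariances of $(\omega(0)-\omega(e))^n$, and then the entire-function condition \eqref{r_entire} is used to sum the power series of $r$ and obtain $\sup_p|\wh C(p)|<\infty$, whence $(\wt\varphi_l,(-\Delta)^{-1}\wt\varphi_l)<\infty$ in $d\ge3$. Your outline does not contain this mechanism.

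\textbf{Part (1), lower bound.} With a single martingale--compensator split, a ``Cauchy--Schwarz absorption of the cross-term'' does not give the stated lower bound, since $\expect{M(t)\cdot C(t)}$ can be negative and of order $t$. The paper instead splits the jump rates as $w=\gamma+(w-\gamma)$ and correspondingly writes $X(t)=N(t)+M(t)+\int\ol\varphi+\int\wt\varphi$, where $N(t)$ is the martingale coming from the constant rate-$\gamma$ jumps. This $N(t)$ is \emph{uncorrelated} with all the other terms, so $\expect{(e\cdot X(t))^2}\ge\expect{(e\cdot N(t))^2}=2\gamma t$ directly. (The Yaglom trick you might have in mind does not help here because $\phi$ is not odd under $\omega\mapsto-\omega$; only $\wt\varphi$ is.)

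\textbf{Part (2).} Your description of the graded structure is slightly off and misses the point that makes the argument close. The skew-symmetric part $A=\sum_e(\nabla_{-e}a_e-a_e^*\nabla_{-e})$ shifts the chaos degree by $\pm1$ only, while the symmetric perturbation $S_1=\tfrac12\sum_e\nabla_{-e}s(a_e^*+a_e)\nabla_e$ shifts by even amounts up to $\pm4$ (not $\pm3$) when $s$ is quartic. The crucial issue is that $\norm{S_1\!\upharpoonright_{\cH_n}}$ grows like $n^2$ when $\deg s=4$, which would violate the standard graded sector condition (linear growth in $n$). What saves the day is the paper's enhancement of the GSC: the bound on the \emph{symmetric} off-diagonal blocks \eqref{evenbound} is allowed to grow like $n^2$ (because the commutator identity $2TST-ST^2-T^2S$ produces $(t(n)-t(n+j))^2$ rather than $|t(n)^2-t(n+j)^2|$), while the \emph{skew-symmetric} blocks in \eqref{oddbound} are still restricted to linear growth. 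The smallness of $s_4/\gamma$ is needed to place the leading constant of the $n^2$ term below the threshold $1/(6r^3\kappa^2)$ in \eqref{evenbound}; it is not a generic perturbative smallness but a comparison with the specific constant in the enhanced GSC. The bound for $A$ is obtained separately by showing $\norm{|\Delta|^{-1/2}a_e^*\!\upharpoonright_{\cH_n}}\le C\sqrt{n}$ via an explicit Fourier computation in $\wh\cK_n$, which uses $d\ge3$.
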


\begin{remarks}
\newcounter{szaml2}
\begin{list}
{(\arabic{szaml2})}{\usecounter{szaml2}\setlength{\leftmargin}{1em}}
\item We do not strive to obtain optimal constants in our conditions. The upper
bound imposed on the ratio $s_4/\gamma$ which emerges from the computations in
the proof in Section \ref{s:check_gsc} is rather restrictive but far from
optimal.

\item It is clear that, in dimensions $d\ge3$, other stationary distributions of the
process $t\mapsto\eta(t)$ must exist. In particular, due to transience of the
process $t\mapsto X(t)$, the stationary measure (presumably) reached from
starting with ``empty'' initial conditions $\ell(0,x)\equiv0$ certainly differs
from our $\d\pi$. Our methods and results are valid for the particular
stationary distribution $\d\pi$.

\end{list}

\end{remarks}

\subsection{SRBP} \label{ss:setup_SRBP}

The environment profile appearing on the right-hand side of \eqref{Brpolydiff}, \eqref{Brpolydiff2}, as seen in a moving coordinate frame tied to the current position of the process is $t\mapsto\eta(t,\cdot)$:
\begin{align}
\label{envir}
\eta(t,x)
:=&\,\,
\eta(0,X(t)+x)+
\int_0^t V(X(t)+x-X(u))\,\d u
\\
\notag =&\,\, \big(V*\ell (t,\cdot)\big)(X(t)+x).
\end{align}
Here, $\eta(0,x):=\big(V*l(0,\cdot)\big)(x)$ is the initial condition for the
process. $t\mapsto\eta(t,\cdot)$ is a Markov process with continuous sample
paths in the Fr\'echet space
\begin{equation}
\label{OmegaSRBP} \Omega := \big\{\omega\in C^{\infty}(\R^d\to\R)\,:\,
\norm{\omega}_{m,r}<\infty \big\}
\end{equation}
where $\norm{\omega}_{m,r}$ are the seminorms
\begin{equation}
\label{seminorms}
\norm{\omega}_{m,r} := \sup_{x\in\R^d} \,
\big(1+\abs{x}\big)^{-1/r} \, \abs{\partial^{\abs{m}}_{m_1,\dots,m_d}\omega(x)}
\end{equation}
defined for the multiindices $m=(m_1,\dots,m_d)$, $m_j\ge0$; and $r\ge1$. The
group of spatial shifts
\begin{align}
\label{shifts_cont} \tau_z:\Omega\to\Omega, \qquad \tau_z\omega(x)=\omega(x+z),
\qquad z\in\R^d
\end{align}
acts naturally on the space $\Omega$.

In order to write down the infinitesimal generator of the process
$t\mapsto\eta(t)\in\Omega$, we define the following linear operators acting on \emph{smooth cylinder functions}
\begin{align}
\label{grad}
\nabla_j f(\omega)
&:=
\lim_{\vareps\to0}\vareps^{-1}\big(f(\tau_{\vareps e_j}\omega)-f(\omega)\big),
\\
\label{lap}
\Delta f(\omega)
&:=
\sum_{j=1}^d \nabla_j^2 f(\omega),
\\
\label{dirder} \cD f(\omega) &:=
\lim_{\vareps\to0}\vareps^{-1}\big(f(\omega+\vareps V)-f(\omega)\big).
\end{align}
It is easy to see that these linear operators are indeed well defined for smooth cylinder functions $f:\Omega\to \R$ of the form
\begin{align}
\label{cylf} f(\omega)=F(\omega(x_1),\dots,\omega(x_n)), \qquad x_j\in\R^d,
\qquad F\in C^{\infty}(\R^d\mapsto\R).
\end{align}

One finds that the infinitesimal generator of the process $t\mapsto\eta(t)$,
acting on smooth cylinder functions, is
\begin{align}
\label{infgen_p} Gf(\omega) = \frac12\Delta f(\omega) - \sum_{l=1}^d
\partial_l\omega(0) \nabla_l f(\omega) + \cD f(\omega).
\end{align}
The first two terms come from the infinitesimal shift of profile due to displacement of the particle, the last one from the infinitesimal change of profile due to increase of local time.

We denote by $\pi$ the Gaussian probability measure on $\Omega$ with expectation and covariances
\begin{equation}
\label{cov} \int_\Omega\omega(x) \d\pi(\omega)=0, \qquad
\int_\Omega\omega(x)\omega(y) \d\pi(\omega)= g*V(x-y) =:C(x-y)
\end{equation}
where
\begin{equation}
\label{greenf}
g:\R^d\to\R, \qquad
g(x):= \abs{x}^{2-d}
\end{equation}
is the Green function of the Laplacian in $\R^d$. (Mind that throughout this paper $d\ge3$.)
This is the \emph{massless free Gaussian field} whose ultraviolet singularity is smeared out by convolution with the unique positive definite solution of $U*U=V$. ($U$ is itself a smooth and rapidly decaying approximate identity.) The Fourier transform of the covariance is
\begin{equation}
\label{covarFour}
\wh C(p)=\abs{p}^{-2} \wh V(p).
\end{equation}
It will turn out that $\pi$ is a stationary (in time) and ergodic distribution for the Markov process $\eta(\cdot)$. All results will be meant for the process being in this stationary regime.

The results for the Brownian polymer model are the analogues to those for the
TSAW:

\begin{proposition}
\label{prop:stat_erg_poly} The Gaussian probability measure $\pi(\d\omega)$ on
$\Omega$ with mean $0$ and covariances \eqref{cov} is time-invariant and
ergodic for the $\Omega$-valued Markov process $t\mapsto\eta(t)$.
\end{proposition}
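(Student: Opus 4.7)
The plan is to verify two identities for $f$ in the core of smooth cylinder functions $f(\omega)=F(\omega(x_1),\dots,\omega(x_n))$: the invariance identity $\int Gf\,d\pi=0$ (giving stationarity) and the Dirichlet-form identity $\int f\,Gf\,d\pi=-\tfrac12\int|\nabla f|^2\,d\pi$, which, combined with the shift-ergodicity of $(\Omega,\pi,\{\tau_z\})$, yields ergodicity of $\eta(\cdot)$.

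For stationarity, the main tool is Gaussian integration by parts (Wick's formula). Since $\pi$ is translation invariant, $\tfrac12\int\Delta f\,d\pi=0$ automatically. Expanding $\nabla_l f=\sum_k\partial_l\omega(x_k)\,\partial_kF$ and applying Wick to $\int\partial_l\omega(0)\,\partial_l\omega(x_k)\,\partial_kF\,d\pi$ produces two types of Gaussian contractions. The \emph{contact} contraction sums to $-\Delta C(x_k)\int\partial_kF\,d\pi$, and by the defining identity $-\Delta C=V$ (up to the normalisation absorbed into $g$, since $C=g*V$ and $-\Delta g=\delta$) this cancels exactly the contribution of $\cD f=\sum_kV(x_k)\partial_kF$. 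The remaining \emph{non-contact} contraction $\sum_{i,j,k}\nabla C(x_k-x_j)\cdot\nabla C(-x_i)\int\partial_i\partial_j\partial_kF\,d\pi$ vanishes upon symmetrisation in $(j,k)$, using the oddness of $\nabla C$ together with the symmetry of $\partial_i\partial_j\partial_kF$ in its indices.

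For the Dirichlet-form identity, compute $\int f\,Gf\,d\pi$ directly. Translation invariance gives $\nabla_l^*=-\nabla_l$ on $L^2(\pi)$, so $\tfrac12\int f\Delta f\,d\pi=-\tfrac12\int|\nabla f|^2\,d\pi$. Writing $-\int f\,\partial_l\omega(0)\nabla_l f\,d\pi = -\tfrac12\int\partial_l\omega(0)\,\nabla_l(f^2)\,d\pi$ and using $\nabla_l[\partial_l\omega(0)]=\partial_l^2\omega(0)$ yields $+\tfrac12\int\Delta\omega(0)\,f^2\,d\pi$ after summing over $l$. The Cameron--Martin formula in the direction $V$, combined with the formal identity $C^{-1}V=-\Delta\delta_0$ (in Fourier: $\wh V/\wh C=|p|^2$), produces $\int f\cD f\,d\pi = -\tfrac12\int\Delta\omega(0)\,f^2\,d\pi$. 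The two $\Delta\omega(0)f^2$ contributions cancel, leaving $\int f\,Gf\,d\pi=-\tfrac12\int|\nabla f|^2\,d\pi$. If $Gf=0$ in $L^2(\pi)$, then $\nabla f\equiv 0$, so $f$ is invariant under every spatial shift $\tau_z$; since $\pi$ has continuous spectral density $\wh V(p)/|p|^2$, it is $\tau_z$-ergodic (Maruyama's theorem for stationary Gaussian measures), forcing $f$ to be $\pi$-a.s.\ constant.

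The main obstacle is the rigorous interpretation of the Cameron--Martin derivative in the direction $V$: because $V$ typically does not belong to the Cameron--Martin Hilbert space of the massless free field, the identity $C^{-1}V=-\Delta\delta_0$ is only formal. It must be paired with the $\tfrac12\Delta$-term and read as an identity between pregenerators on smooth cylinder functions, in parallel with the Itô derivation of $G$ from \eqref{Brpolydiff}. The fully self-contained version of this argument, including the verification that smooth cylinder functions form a core for $G$ and that $\pi$ as specified by \eqref{cov} lies in the appropriate Fr\'echet space $\Omega$, is deferred to the companion paper \cite{horvath_toth_veto_09}.
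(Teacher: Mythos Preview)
Your argument is correct and is, at bottom, the same computation the paper carries out, but packaged at the level of integrals rather than operators. The paper first rewrites the generator in the Fock-space form
\[
G=\tfrac12\Delta+\sum_{l=1}^d\big(a_l^*\nabla_l+\nabla_l a_l\big)=:-S+A,
\]
using the Malliavin directional-derivative identity for $\cD$ and the fact that multiplication by $\partial_l\omega(0)$ is $a_l^*+a_l$ (this is exactly your ``contact'' vs.\ ``non-contact'' Wick contractions, recast as an operator identity). Once this form is established, $A^*=-A$ is manifest, so $G^*\one=0$ (equivalently, Yaglom reversibility $JGJ=G^*$) gives stationarity, and $(f,Sf)=\tfrac12\|\nabla f\|^2$ gives ergodicity via shift-ergodicity of $(\Omega,\pi,\tau_z)$, just as you do. What the operator route buys is reusability: the decomposition $S=-\tfrac12\Delta$, $A=A_++A_-$ is needed anyway for the graded sector condition in Section~\ref{s:check_gsc}. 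What your route buys is self-containment for this proposition alone, without introducing creation/annihilation operators.

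One remark: your caveat about $V$ not lying in the Cameron--Martin space is too cautious. The Cameron--Martin norm here is $\|V\|_{CM}^2=\int_{\R^d}|\wh V(p)|^2/\wh C(p)\,\d p=\int_{\R^d}|p|^2\wh V(p)\,\d p$, which is finite since $V$ is smooth with fast decay. Hence $\cD^*=-\cD+\langle C^{-1}V,\omega\rangle$ with $\langle C^{-1}V,\omega\rangle=-\Delta\omega(0)$ is a genuine (not merely formal) identity on smooth cylinder functions, and your Dirichlet-form computation needs no further justification on that point.
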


\begin{corollary}
\label{cor:lln_poly}
For $\pi$-almost all initial profiles $\eta(0,\cdot)$,
\begin{equation}
\label{lln2}
\lim_{t\to\infty}\frac{X(t)}{t}=0
\quad
\mathrm{a.s.}
\end{equation}
\end{corollary}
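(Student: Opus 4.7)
The plan is to decompose the displacement as $X(t)=B(t)+D(t)$, where $D(t)$ is an additive functional of the environment process $t\mapsto\eta(t)$, and then to apply Birkhoff's ergodic theorem to $D(t)/t$, exploiting the stationarity and ergodicity of $\pi$ established in Proposition~\ref{prop:stat_erg_poly}.

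First I would express the drift as a local observable of $\eta(t)$. From \eqref{envir} we have $\eta(t,x)=(V*\ell(t,\cdot))(X(t)+x)$, so differentiating in $x$ at $x=0$ gives $\grad(V*\ell(t,\cdot))(X(t))= (\grad\eta(t,\cdot))(0)$. Introducing, for each $l=1,\dots,d$, the observable $\varphi_l:\Omega\to\R$ defined by $\varphi_l(\omega):=\partial_{x_l}\omega(x)\big|_{x=0}$, equation \eqref{Brpolydiff2} becomes
\[
X_l(t) = B_l(t)-\int_0^t \varphi_l(\eta(s))\,\d s, \qquad l=1,\dots,d.
\]

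Next I would check that $\varphi_l\in L^1(\pi)$ and has mean zero. Since $\pi$ is the centered Gaussian measure with covariance $C=g*V$ given by \eqref{cov}, and $C$ is smooth thanks to the mollifier $V$, the linear functional $\varphi_l$ is $\pi$-a.s.\ well-defined and Gaussian with finite variance $-\partial^2_{x_lx_l}C(0)$. Its $\pi$-mean vanishes by translation invariance (equivalently, by the symmetry $\omega\mapsto-\omega$): since $\int_\Omega\omega(x)\,\d\pi(\omega)=0$ for every $x\in\R^d$, differentiating under the integral at $x=0$ gives $\int_\Omega \varphi_l(\omega)\,\d\pi(\omega)=0$. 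By Proposition~\ref{prop:stat_erg_poly} and Birkhoff's ergodic theorem, for $\pi$-a.e.\ initial profile $\eta(0,\cdot)$,
\[
\lim_{t\to\infty}\frac{1}{t}\int_0^t\varphi_l(\eta(s))\,\d s = 0 \qquad \text{almost surely.}
\]
Combined with the elementary fact $B(t)/t\to 0$ a.s., this yields $X(t)/t\to 0$ a.s.\ for $\pi$-a.e.\ initial profile, as claimed.

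The proof is therefore essentially immediate once one recognises the drift in \eqref{Brpolydiff2} as an additive functional of the ergodic environment process. The only small technical point is verifying that the pointwise-derivative functional $\varphi_l$ is well-defined and integrable on the Fr\'echet space $\Omega$ under $\pi$, which is routine because the covariance kernel $C=g*V$ is of class $C^\infty$, so $\pi$ is supported on smooth fields with controlled growth and all linear evaluations $\omega\mapsto\partial^{\alpha}\omega(x)$ have Gaussian moments. No hard dispersion or variance estimate is needed here, in sharp contrast with the CLT of Theorem~\ref{thm:diffusive_limit}, which occupies the remainder of the paper.
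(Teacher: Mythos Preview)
Your proposal is correct and matches the paper's (implicit) argument: the paper derives Corollary~\ref{cor:lln_poly} in one line at the end of Section~\ref{ss:spaces_and_operators_SRBP_Gaussian} as an immediate consequence of the stationarity and ergodicity of Proposition~\ref{prop:stat_erg_poly}, via exactly the decomposition $X(t)=B(t)+\int_0^t\varphi(\eta(s))\,\d s$ with $\varphi_l(\omega)=\partial_l\omega(0)$ that reappears explicitly in \eqref{martingale+compensator_SRBP}--\eqref{compansator_SRBP}. You have simply spelled out the details (integrability and mean zero of $\varphi_l$, Birkhoff, $B(t)/t\to0$) that the paper leaves to the reader.
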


\begin{theorem}
\label{thm:clt_poly}
In dimensions $d\ge3$, the following hold:
\newcounter{szaml3}
\begin{list}{(\arabic{szaml3})}{\usecounter{szaml3}\setlength{\leftmargin}{1em}}
\item
The limiting variance
\begin{equation}
\label{variance2}
\sigma^2:=d^{-1}\lim_{t\to\infty}t^{-1}\expect{\abs{X(t)}^2}
\end{equation}
exists and
\begin{equation}
\label{bounds}
1 \le \sigma^2 \le 1+\rho^2
\end{equation}
where
\begin{equation}
\label{sumcond}
\rho^2:= d^{-1}\int_{\R^d} \abs{p}^{-2}\wh V(p)\,\d p<\infty.
\end{equation}
\item
The finite dimensional marginal distributions of the diffusively rescaled process
\begin{equation}
\label{rescaled2}
X_N(t)
:=
\frac{X(Nt)}{\sigma \sqrt N}
\end{equation}
converge to those of a standard $d$ dimensional Brownian motion. The
convergence is meant in probability with respect to the starting state
$\eta(0)$ sampled according to $\d\pi$.
\end{list}
\end{theorem}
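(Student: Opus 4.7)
The strategy is to analyze the additive functional appearing in the natural semimartingale decomposition of $X(t)$ via the non-reversible Kipnis--Varadhan machinery surveyed in Section 4. From \eqref{Brpolydiff2} together with the definition \eqref{envir} of the environment process, the displacement decomposes componentwise as
\be
X_j(t) - X_j(0) = B_j(t) - \int_0^t \partial_j \eta(s, 0)\,\d s = B_j(t) - \int_0^t \phi_j(\eta(s))\,\d s,
\ee
where $\phi_j(\omega) := \partial_j \omega(0)$ is a centered element of $\cL^2(\Omega, \pi)$ (mean zero by the $\omega \mapsto -\omega$ symmetry of the Gaussian $\pi$, square integrable because $V$ is smooth so the covariance \eqref{covarFour} is twice-differentiable at the origin). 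Proposition \ref{prop:stat_erg_poly} puts the environment process in the stationary ergodic regime, so the general Kipnis--Varadhan scheme applies: once one establishes an $H_{-1}$ bound on $\phi_j$ and a graded sector condition for the antisymmetric part of $G$, the standard martingale-plus-corrector decomposition rewrites $\int_0^t \phi_j(\eta(s))\,\d s$ as an $\cL^2$-martingale plus a boundary remainder of order $\ordo_P(\sqrt t)$, from which the CLT \eqref{rescaled2} for finite-dimensional distributions follows together with the existence of the limiting variance \eqref{variance2}.

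The first analytic ingredient is the $H_{-1}$ bound on each $\phi_j$, equivalent to a diffusive bound on the variance of its primitive. Since $\pi$ is Gaussian with covariance \eqref{cov}--\eqref{covarFour}, one works on the Wiener chaos decomposition of $\cL^2(\Omega,\pi)$; the function $\phi_j$ sits in the first chaos, and the Dirichlet form associated with the symmetric part $S=-(G+G^*)/2$ of the generator is computable explicitly in Fourier variables. The resulting $H_{-1}$ norm of $\phi_j$ is controlled by the integral $\rho^2$ of \eqref{sumcond}, whose finiteness is precisely where the assumption $d\ge3$ enters: the integrand $\abs{p}^{-2}\wh V(p)$ is locally integrable at the origin only for $d>2$. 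The same computation simultaneously yields the upper bound $\sigma^2\le 1+\rho^2$ on the limiting variance. The lower bound $\sigma^2\ge1$ reflects the unavoidable quadratic variation $t\,\boI$ contributed by the driving Brownian motion $B(t)$; cross terms between $B$ and the corrector martingale in the Kipnis--Varadhan decomposition are handled through the Gaussian symmetry of $\pi$ and cannot erode this floor.

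The main technical burden, and the principal obstacle, is the verification of the graded sector condition for the antisymmetric part $A=(G-G^*)/2$ of $G$ in the Wiener chaos grading of $\cL^2(\Omega,\pi)$. Inspection of \eqref{infgen_p} shows that the drift term $-\sum_l \partial_l \omega(0)\nabla_l$ and the Malliavin-type term $\cD$ combine, after separating symmetric and antisymmetric parts, into raising and lowering operators that couple the $n$-th chaos $\cH_n$ to $\cH_{n\pm1}$. The plan is to express $A$ block by block on this decomposition, obtain operator-norm bounds on $(-S)^{-1/2}A|_{\cH_n}(-S)^{-1/2}$ with at most polynomial growth in $n$, and then invoke the slightly weakened version of the graded sector condition established in Section 4. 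Because $\pi$ is exactly Gaussian in the SRBP case, all these estimates reduce to clean Fourier computations and no smallness assumption on any parameter is needed --- which is the structural reason why the CLT for SRBP holds unconditionally for all $d\ge 3$, in sharp contrast with the TSAW case of Theorem~\ref{thm:diffusive_limit}(2), where an analogous verification requires the polynomial restriction \eqref{polynomial} together with $s_4/\gamma$ small.
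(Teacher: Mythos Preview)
Your outline follows essentially the paper's route: the same decomposition $X_l(t)=B_l(t)+\int_0^t\varphi_l(\eta(s))\,\d s$ with $\varphi_l(\omega)=\partial_l\omega(0)$, the same $H_{-1}$ computation $(\varphi_l,S^{-1}\varphi_l)=\int_{\R^d} p_l^2\abs{p}^{-2}\wh V(p)\,\d p$ giving the upper bound $\sigma^2\le1+\rho^2$, and the same graded sector verification in Fourier coordinates on the chaos grading (the paper obtains $\norm{\abs{\Delta}^{-1/2}a_l^*\upharpoonright_{\cH_n}}\le C\sqrt{n+1}$, i.e.\ exponent $1/2$). You are also right that, since here $S=-\tfrac12\Delta$ is diagonal with no off-diagonal $S_1$ piece, condition \eqref{evenbound} is vacuous and no smallness hypothesis enters.

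The one place where your sketch is genuinely incomplete is the lower bound $\sigma^2\ge1$. Writing that ``cross terms between $B$ and the corrector martingale are handled through the Gaussian symmetry of $\pi$'' is not yet an argument: the KV corrector martingale is adapted to the same filtration as $B$, and nothing in the abstract KV machinery rules out negative correlation between them. The paper's mechanism (Lemma~\ref{lemma:forwbackw}) is concrete and does not pass through the corrector. Using Yaglom reversibility --- that $t\mapsto-\eta(-t)$ has the same law as $t\mapsto\eta(t)$, combined with the oddness of $\varphi$ under $\omega\mapsto-\omega$ --- one shows that $M(s,t):=B(t)-B(s)$ is simultaneously a \emph{forward} martingale in $t$ and a \emph{backward} martingale in $s$. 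From this it follows that $M(s,t)$ and $\int_s^t\varphi(\eta(u))\,\d u$ are uncorrelated, hence $\expect{\abs{X(t)}^2}=d\,t+\expect{\abs{\int_0^t\varphi(\eta(u))\,\d u}^2}$, and the lower bound is immediate. Your phrase ``Gaussian symmetry of $\pi$'' is pointing in the right direction (the flip $\omega\mapsto-\omega$), but the forward/backward martingale lemma is the substantive step you should make explicit.
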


The remarks formulated after Theorem \ref{thm:diffusive_limit} are equally valid for the SRBP model, too.

\section{Spaces and operators}
\label{s:spaces_and_operators}

\subsection{TSAW, general case}
\label{ss:spaces_and_operators_TSAW_general}

We put ourselves in the Hilbert space $\cH:=\cL^2(\Omega, \pi)$ and define some linear operators. The following shift and difference operators will be used:
\begin{align}
\label{diffops}
T_e f(\omega):=f(\tau_e\omega),
\qquad
\nabla_e:=T_{e}-I,
\qquad
\Delta:=\sum_{e\in\cU}\nabla_e=-\frac12\sum_{e\in\cU}\nabla_{e}\nabla_{-e}.
\end{align}
Their adjoints are
\begin{align}
\label{diffopsadj}
T_e^*=T_{-e},
\qquad
\nabla_e^*=\nabla_{-e},
\qquad
\Delta^*=\Delta.
\end{align}
Occasionally, we shall also use the notation $\nabla_l:=\nabla_{e_l}$.

We also define the multiplication operators
\begin{align}
\label{multops1}
M_e f(\omega)
&:=
s(\omega(0)-\omega(e)) f(\omega),
\\
\label{multops2}
N_e f(\omega)
&:=
r(\omega(0)-\omega(e)) f(\omega),
\qquad
N
:=
\sum_{e\in\cU} N_e.
\end{align}
These are unbounded self-adjoint operators. The following commutation relations are straightforward:
\begin{align}
\label{cr1}
M_eT_e-T_eM_{-e}
=0=
N_eT_e+T_eN_{-e}.
\end{align}
The (unbounded) differential operator $\cD$ is defined in \eqref{partial_op} on
the dense subspace of smooth cylinder functions and it is extended by graph
closure. Integration by parts on $(\Omega,\pi)$ yields
\begin{align}
\label{partial_adjoint} \cD+\cD^*=2N.
\end{align}

Next, we express the infinitesimal generator \eqref{infgen} of the semigroup of
the Markov process $t\mapsto\eta(t)$ acting on $\cL^2(\Omega,\pi)$. Denote
\begin{align}
S:=-\frac12(G+G^*), \qquad A:=\frac12(G-G^*)\label{symantisym}
\end{align}
the self-adjoint, respectively, skew self-adjoint parts of the infinitesimal generator. Using \eqref{cr1} and \eqref{partial_adjoint}, we readily obtain
\begin{align}
\label{symm_gen} S &= -\gamma \Delta + S_1,
\\
\label{symm_gen_1}
S_1
&=
-
\sum_{e\in\cU} M_e\nabla_e
=
\frac12 \sum_{e\in\cU} \nabla_{-e} M_e\nabla_e,
\\
\label{skew_symm_gen} A &= \phantom{-} \sum_{e\in\cU} N_e T_{e} +\big(\cD
-N\big).
\end{align}
Note that both $-\gamma \Delta$ and $S_1$ are \emph{positive operators}.
Actually, $\gamma \Delta$ is the infinitesimal generator of the process of
``scenery seen by the random walker'' (in the so-called RW in random scenery)
and $-S_1$ is the infinitesimal generator of ``environment seen by the random
walker in a symmetric RWRE''.

It is also worth noting that, defining the unitary involution
\begin{align}
\label{J-op}
Jf(\omega):=f(-\omega),
\end{align}
we get
\begin{align}
\label{yaglom_TSAW}
JSJ=S,
\quad
JAJ=-A,
\qquad
JGJ=G^*.
\end{align}
Stationarity drops out: indeed, $G^*\one=0$. Actually, \eqref{yaglom_TSAW} means slightly more than stationarity: the time-reversed and flipped process
\begin{align}
\label{rev-flip} t\mapsto\wt\eta(t):=-\eta(-t)
\end{align}
is equal in law to the process $t\mapsto\eta(t)$. This time reversal symmetry
is called \emph{Yaglom reversibility} and it appears in many models with
physical symmetries. See e.g.\ \cite{dobrushin_suhov_fritz_88},
\cite{yaglom_47}, \cite{yaglom_49}.

Ergodicity is also straightforward: for the Dirichlet form of the process $t\mapsto\eta(t)$ we have
\begin{align}
\label{erg} (f,-Gf)=(f,Sf)\ge\gamma(f,-\Delta f) =
\frac12\sum_{e\in3\cU}\norm{\nabla_e f}^2,
\end{align}
and hence, $Gf=0$ implies $\nabla_e f=0$, $e\in\cU$, which, in turn, by
ergodicity of the shifts on $(\Omega,\pi)$, implies $f=\text{const.}\one$.
Hence, Proposition \ref{prop:statinarity+ergodicity} and Corollary
\ref{cor:lln}.

\subsection{TSAW, the Gaussian case}
\label{ss:spaces_and_operators_TSAW_Gaussian}

\subsubsection{Spaces}

In the case where $r(u)=u$, the stationary measure defined by
\eqref{specifications} is Gaussian, and we can build up the Gaussian Hilbert space $\cH=\cL^2(\Omega,\pi)$ and its unitary equivalent representations as Fock spaces in the usual way.

We use the following convention for normalization of  Fourier transform
\begin{equation}
\label{lattice_FourierTransform} \wh u(p) = \sum_{x\in\Z^d}e^{i p\cdot x}u(x),
\qquad u(x) = (2\pi)^{-d}\int_{(-\pi,\pi]^d} e^{-i p\cdot x} \wh u(p)\,\d p,
\end{equation}
and the shorthand notation
\begin{align}
\label{notation1}
&
\vx=(x_1,\dots,x_n)\in\Z^{dn},
&&
x_m=(x_{m1},\dots,x_{md})\in\Z^d,
\\
\label{notation3}
&
\vp=(p_1,\dots,p_n)\in(-\pi,\pi]^{dn},
&&
p_m=(p_{m1},\dots,p_{md})\in(-\pi,\pi]^d,
\end{align}
$m=1,\dots,n$.

We denote by $\cS_n$, respectively, $\wh \cS_n$, the Schwartz space of
symmetric test functions of $n$ variables on $\Z^d$, respectively, on
$(-\pi,\pi]^d$:
\begin{align}
\label{SndefTSAW} \cS_n:&=\{u:\Z^{dn}\to\C:u\ \text{of rapid decay},
u(\varpi\vx)=u(\vx),\varpi\in\Perm(n)\},
\\
\label{hSndefTSAW} \wh \cS_n:&=\{\wh u:[-\pi,\pi]^{dn}\to\C:\wh u\in
C^{\infty}, \wh u(\varpi\vp)=\wh u(\vp), \varpi\in\Perm(n)\}.
\end{align}
In the preceding formulas $\Perm(n)$ denotes the symmetric group of permutations acting on the $n$ indices.

As noted before, in the case of $r(u)=u$, the random variables  $\big(\omega(x):x\in\Z^d\big)$ form the \emph{massless free Gaussian field} on $\Z^d$ with expectation and covariances given in \eqref{mfgfcovar}. The Fourier transform of the covariances is
\begin{align}
\wh C(p)=\wh D(p)^{-1}
\end{align}
where $\wh D(p)$ is the Fourier transform of the lattice Laplacian:
\begin{align}
\label{lat_Lap_Four}
\wh D(p):=
\sum_{l=1}^d \big(1-\cos(p_l)\big).
\end{align}

We endow the spaces $\cS_n$, respectively, $\wh \cS_n$
with the following scalar products
\begin{align}
\label{KnscprodTSAW}
&
\langle u,v\rangle:=
\sum_{\vx\in\Z^{dn}}\sum_{\vy\in\Z^{dn}} C(\vx-\vy)\overline{u(\vx)}v(\vy),
\\
\label{hKnscprodTSAW} & \langle \wh u,\wh v\rangle:= \int_{[-\pi,\pi]^{dn}}
\wh C(\vp) \overline{\wh u(\vp)} \wh v(\vp) \,\d\vp
\end{align}
where
\begin{equation}
\label{covkernels}
C(\vx-\vy):=\prod_{m=1}^n C(x_m-y_m),
\qquad
\wh C(\vp):=\prod_{m=1}^n \wh C(p_m).
\end{equation}
Let $\cK_n$ and $\wh \cK_n$ be the closures of $\cS_n$, respectively, $\wh
\cS_n$ with respect to the Euclidean norms defined by these inner products. The
Fourier transform \eqref{lattice_FourierTransform} realizes an isometric
isomorphism between the Hilbert spaces $\cK_n$ and $\wh \cK_n$.

These Hilbert spaces are actually the symmetrized $n$-fold tensor products
\begin{equation}
\label{KnhKn} \cK_n:=\mathrm{symm}\big(\cK_1^{\otimes n}\big), \qquad
\wh\cK_n:=\mathrm{symm}\big(\wh\cK_1^{\otimes n}\big).
\end{equation}
Finally, the full Fock spaces are
\begin{equation}
\label{Fockspaces}
\cK:=\overline{\oplus_{n=0}^\infty \cK_n}, \qquad
\wh\cK:=\overline{\oplus_{n=0}^\infty \wh\cK_n}.
\end{equation}

The Hilbert space of our true interest is $\cH=\cL^2(\Omega,\pi)$. This is itself a graded Gaussian Hilbert space
\begin{equation}
\label{Hgraded}
\cH=\overline{\oplus_{n=0}^\infty \cH_n}
\end{equation}
where the subspaces $\cH_n$ are isometrically isomorphic with the subspaces $\cK_n$ of $\cK$ through the identification
\begin{equation}
\label{HKisometryTSAW} \phi_n: \cK_n\to\cH_n, \quad
\phi_n(u):=\frac{1}{\sqrt{n!}}\sum_{\vx\in\Z^{dn}}
u(\vx)\wick{\omega(x_1)\dots\omega(x_n)}.
\end{equation}
Here and in the rest of this paper, we denote by \wick{X_1\dots X_n} the Wick product of the jointly Gaussian random variables $(X_1,\dots,X_n)$.

As the graded Hilbert spaces
\begin{equation}
\cH:=\overline{\oplus_{n=0}^\infty \cH_n}, \quad
\cK:=\overline{\oplus_{n=0}^\infty \cK_n}, \quad \wh
\cK:=\overline{\oplus_{n=0}^\infty \wh\cK_n}\label{3fullspaces}
\end{equation}
are isometrically isomorphic in a natural way, we shall move freely between the various representations.

\subsubsection{Operators}

First, we give the action of the operators $\nabla_e$, $\Delta$, etc.\
introduced in Subsection \ref{ss:spaces_and_operators_TSAW_general} on the
spaces $\cH_n$, $\cK_n$ and $\wh\cK_n$. The point is that we are interested
primarily in their action on the space $\cL^2(\Omega,\pi) =
\overline{\oplus_{n=0}^\infty\cH_n}$, but explicit computations in later
sections are handy in the unitary equivalent representations over the space
$\wh\cK=\overline{\oplus_{n=0}^\infty\wh\cK_n}$. The action of various
operators over $\cH_n$ will be given in terms of the Wick monomials
$\wick{\omega(x_1)\dots\omega(x_n)}$ and it is understood that the operators
are extended by linearity and graph closure.

\begin{itemize}

\item
The operators $\nabla_e$, $e\in\cU$ map $\cH_n\to\cH_n$, $\cK_n\to\cK_n$,
$\wh\cK_n\to\wh\cK_n$, in turn, as follows:
\begin{align}
\label{nablaonHn}
&
\nabla_e\wick{\omega(x_1)\dots\omega(x_n)}
=
\wick{\omega(x_1+e)\dots\omega(x_n+e)}-\wick{\omega(x_1)\dots\omega(x_n)},
\\
\label{nablaonKn}
&
\nabla_e u(\vx)
=
u(x_1-e,\dots,x_n-e)-u(x_1,\dots,x_n),
\\
\label{nablaonhKn}
&
\nabla_e \wh u(\vp)
=
\big(
\exp\left(i\textstyle\sum_{m=1}^n p_{m}\cdot e \right) -1 \big) \wh u(\vp).
\end{align}

\item
The operator $\Delta$ maps $\cH_n\to\cH_n$, $\cK_n\to\cK_n$,  $\wh\cK_n\to\wh\cK_n$, in turn, as follows:
\begin{align}
\label{DeltaonHn}
&
\Delta \wick{\omega(x_1)\dots\omega(x_n)}
=
\sum_{e\in\cU}
\wick{\omega(x_1+e),\dots,\omega(x_n+e)}
-2d \wick{\omega(x_1)\dots\omega(x_n)},
\\
\label{DeltaonKn}
&
\Delta u(\vx)
=
\sum_{e\in\cU}
u(x_1+e,\dots,x_n+e)-2d u(\vx),
\\
\label{DeltaonhKn}
&
\Delta \wh u(\vp) =
-2\wh D \left(\textstyle\sum_{m=1}^n p_m\right) \wh u(\vp).
\end{align}

\item
The operators $\abs{\Delta}^{-1/2}\nabla_e$ map $\cH_n\to\cH_n$, $\cK_n\to\cK_n$,  $\wh\cK_n\to\wh\cK_n$. There is no explicit expression for the first two. The action $\wh\cK_n\to\wh\cK_n$ is as follows:
\begin{equation}
\label{nonameopsonhKn} \abs{\Delta}^{-1/2}\nabla_e \wh u(\vp) = \frac{\exp
\left(i\textstyle\sum_{m=1}^n p_{m}\cdot e \right) - 1} {\sqrt{2\wh D
\left(\textstyle\sum_{m=1}^n p_m\right)}} \wh u(\vp).
\end{equation}
These are \emph{bounded} operators with norm
\begin{equation}
\label{nonameopnorm}
\norm{ \abs{\Delta}^{-1/2}\nabla_e } =1.
\end{equation}

\item
The creation operators $a^*_e$, $e\in\cU$ map $\cH_n\to\cH_{n+1}$,
$\cK_n\to\cK_{n+1}$,  $\wh\cK_n\to\wh\cK_{n+1}$, in turn, as follows:
\begin{align}
\label{astaronHn}
&
a^*_e\wick{\omega(x_1)\dots\omega(x_n)} =
\wick{(\omega(0)-\omega(e))\omega(x_1)\dots\omega(x_n)},
\\
\label{astaronKn}
&
a^*_e u(x_1,\dots,x_{n+1})
=
\frac{1}{\sqrt{n+1}}
\sum_{m=1}^{n+1}
\big(\delta_{x_m,0}-\delta_{x_m,e}\big)
u(x_1,\dots, \cancel{x_{m}},\dots, x_{n+1}) ,
\\
\label{astaronhKn}
&
a^*_e \wh u(p_1,\dots,p_{n+1})
=
\frac{1}{\sqrt{n+1}}
\sum_{m=1}^{n+1}
\big(e^{i p_m\cdot e} -1 \big)
\wh u(p_1,\dots, \cancel{p_{m}},\dots, p_{n+1}).
\end{align}
The creation operators $a^*_e$ restricted to the subspaces $\cH_n$, $\cK_n$,
respectively, $\wh\cK_n$ are bounded with operator norm
\begin{equation}
\label{astaropnorm}
\norm{a^*_e\upharpoonright_{\cH_n}} =
\norm{a^*_e\upharpoonright_{\cK_n}} = \norm{a^*_e\upharpoonright_{\wh\cK_n}}
= \sqrt{2C(0)-C(e)-C(-e)} \sqrt{n+1}.
\end{equation}

\item
The annihilation operators $a_e$, $e\in\cU$ map $\cH_n\to\cH_{n-1}$,
$\cK_n\to\cK_{n-1}$,  $\wh\cK_n\to\wh\cK_{n-1}$, in turn, as follows:
\begin{align}
\label{aonHn} & a_e \wick{\omega(x_1)\dots\omega(x_n)} = \sum_{m=1}^n
\big(C(x_m+e)-C(x_m)\big)
\wick{\omega(x_1)\dots\cancel{\omega(x_{m})}\dots\omega(x_n)},
\\
\label{aonKn} & a_e u(x_1,\dots,x_{n-1}) = \sqrt{n} \sum_{z\in\Z^d}
\big(C(z+e)-C(z)\big) u(x_1,\dots,x_{n-1},z),
\\
\label{aonhKn} & a_e \wh u(p_1,\dots,p_{n-1}) = \sqrt{n} (2\pi)^{-d}
\int_{[-\pi,\pi]^d} \big(e^{-i q\cdot e} -1\big) \wh C(q)  \wh
u(p_1,\dots,p_{n-1},q)\,\d q.
\end{align}
The annihilation operators $a_e$ restricted to the subspaces $\cH_n$, $\cK_n$, respectively, $\wh\cK_n$ are bounded with operator norm
\begin{equation}
\label{aopnorm}
\norm{a_e\upharpoonright_{\cH_n}}=
\norm{a_e\upharpoonright_{\cK_n}}=
\norm{a_e\upharpoonright_{\wh\cK_n}} =
\sqrt{2C(0)-C(e)-C(-e)} \sqrt{n}.
\end{equation}
As the notation $a^*_e$ and $a_e$ suggests, these operators are adjoints of each other.
\end{itemize}

In order to express the infinitesimal generator in the Gaussian case, two more
observations are needed. Both follow from standard facts in the context of
Gaussian Hilbert spaces or Malliavin calculus.  First, the operator of
multiplication by $\omega(0)-\omega(e)$, acting on $\cH$, is $a^*_e+a_e$.
Hence, the multiplication operators $M_e$ and $N_e$ defined in \eqref{multops1}
and \eqref{multops2}, in the Gaussian case, are
\begin{align}
\label{gaussianmultops}
N_e=a^*_e+a_e,
\qquad
M_e= s(a^*_e+a_e).
\end{align}
Second, from the formula of \emph{directional derivative} in $\cH$, it follows that
\begin{align}
\label{gaussiandirder}
\cD=\sum_{e\in\cU} a_e.
\end{align}
The identity \eqref{gaussiandirder} is checked directly on Wick polynomials and extends by linearity.

Using these identities, after simple manipulations, we obtain
\begin{align}
\label{opsgrading}
S_1
&=
\frac12 \sum_{e\in\cU} \nabla_{-e} s(a_e^*+a_l) \nabla_e,
\\
A &= \sum_{e\in\cU} \nabla_{-e} a_e - \sum_{e\in\cU}  a^*_e\nabla_{-e} =: A_- -
A_+.\label{opagrading}
\end{align}
Note that
\begin{align}
\label{opgrading} \displaystyle A_\pm:\cH_n\to\cH_{n\pm1}, \qquad S_1:
\cH_{n}\to\oplus_{j=-q}^q\cH_{n+2j}
\end{align}
where $2q$ is the degree of the even polynomial $s(u)$.

\subsection{SRBP, Gaussian}
\label{ss:spaces_and_operators_SRBP_Gaussian}

We use the convention of unitary Fourier transform
\begin{equation}
\label{FourierTransform} \wh u(p) := (2\pi)^{-d/2}\int_{\R^d}e^{i p\cdot
x}u(x)\,\d x, \qquad u(x) := (2\pi)^{-d/2}\int_{\R^d}e^{-i p\cdot x}\wh
u(p)\,\d p
\end{equation}
and the shorthand notation corresponding to \eqref{notation1} and
\eqref{notation3} with $\vx\in\R^{dn}$ and $\vp\in\R^{dn}$.

Now, we denote by $\cS_n$, respectively, $\wh \cS_n$ the \emph{symmetric}
Schwartz space of test functions and their Fourier transforms:
\begin{align}
\label{SndefSRBP} \cS_n :&=\{u:\R^{dn}\to\C:u \text{ smooth, of rapid
decay},u(\varpi\vx)=u(\vx),\varpi\in\Perm(n)\},
\\
\label{hSndefSRBP} \wh \cS_n :&=\{\wh u:\R^{dn}\to\C:\wh u \text{ smooth, of
rapid decay},\wh u(\varpi\vp)=\wh u(\vp),\varpi\in\Perm(n)\}.
\end{align}
The spaces $\cS_n$, respectively, $\wh \cS_n$ are endowed with the following scalar products
\begin{align}
\label{KnscprodSRBP}
&
\langle u,v\rangle:=
\int_{\R^{dn}}\int_{\R^{dn}}
C(\vx-\vy)\overline{u(\vx)}v(\vy) \,\d\vx\d\vy,
\\
\label{hKnscprodSRBP}
&
\langle \wh u,\wh v\rangle:=
\int_{\R^{dn}}
 \wh C(\vp) \overline{\wh u(\vp)}\wh v(\vp) \,\d\vp
\end{align}
where the kernels $C(\vx-\vy)$ and $\wh C(\vp)$ are expressed with formulas similar to \eqref{covkernels}.

Next, we can build up the three isometrically isomorphic representations of the graded Hilbert space $\cL^2(\Omega,\pi)$: $\cH$, $\cK$ and $\wh\cK$ as in
\eqref{KnhKn}--\eqref{3fullspaces}. The identification between $\cK_n$ and
$\cH_n$ is
\begin{equation}
\label{HKisometrySRBP} \phi_n: \cK_n\to\cH_n, \quad
\phi_n(u):=\frac{1}{\sqrt{n!}}\int_{\R^{dn}}
u(\vx)\wick{\omega(x_1)\dots\omega(x_n)}\,\d\vx,
\end{equation}
the one between $\cK_n$ and $\wh\cK_n$ is the Fourier transformation
\eqref{FourierTransform}.

We enumerate the operators introduced in Subsection \ref{ss:setup_SRBP}. The
operators which are denoted by the same symbol in the two models behave quite
similarly with natural modifications: the discrete difference in the TSAW model
is replaced by differentiation in the SRBP case. In order to spare space, we
give their actions only on $\wh\cK_n$, which are the most useful formulae for
later calculations. For more details, see \cite{horvath_toth_veto_09}.
\begin{itemize}
\item
The operators $\nabla_l:\wh\cK_n\to\wh\cK_n$, $l=1,\dots,d$:
\begin{equation}
\nabla_l \wh u(\vp) = i \big(\sum_{m=1}^n p_{ml}\big) \wh u(\vp).
\end{equation}
Note that these are actually unbounded, closed, skew self-adjoint operators.  They are the \emph{second quantized gradients}.

\item
The operator $\Delta:\wh\cK_n\to\wh\cK_n$:
\begin{equation}
\Delta \wh u(\vp) = -\abs{\sum_{m=1}^n p_m}^2 \wh u(\vp).
\end{equation}
The operator $\Delta$ is unbounded, densely defined, self-adjoint and positive. Note that $\Delta$ is \emph{not} the second quantized Laplacian.

\item
The operators $\abs{\Delta}^{-1/2}\nabla_l:\wh\cK_n\to\wh\cK_n$, $l=1,\dots,d$:
\begin{equation}
\abs{\Delta}^{-1/2}\nabla_l \wh u(\vp) = \frac{i\sum_{m=1}^n p_{ml}
}{\abs{\sum_{m=1}^n p_m}} \wh u(\vp).
\end{equation}
These are \emph{bounded} skew self-adjoint operators with operator norm
\begin{equation}
\norm{ \abs{\Delta}^{-1/2}\nabla_l } =1,\label{nonameopnormSRBP}
\end{equation}
however $|\Delta|^{-1/2}$ itself is an unbounded, densely defined, self-adjoint
and positive operator.

\item
The creation operators $a^*_l:\wh\cK_n\to\wh\cK_{n+1}$, $l=1,\dots,d$:
\begin{equation}
a^*_l\wh u(p_1,\dots,p_{n+1}) = \frac{1}{\sqrt{n+1}} \sum_{m=1}^{n+1} ip_{ml}
\wh u(p_1,\dots, \cancel{p_{m}},\dots, p_{n+1}).
\end{equation}
The creation operators $a_l^*$, restricted to the subspace $\wh\cK_n$ are
bounded with operator norm
\begin{equation}
\norm{ a^*_l\upharpoonright_{\wh\cK_n} \!\!\!\phantom{\Big|}} = \sqrt{-2
\partial^2_lC(0)} \sqrt{n+1}.
\end{equation}

\item
The annihilation operators $a_l:\wh\cK_n\to\wh\cK_{n-1}$, $l=1,\dots,d$:
\begin{equation}
a_l\wh u(p_1,\dots,p_{n-1}) = \sqrt{n} \int_{\R^d} \wh
u(p_1,\dots,p_{n-1},q)iq_l\wh C(q) \,\d q.
\end{equation}
The annihilation operators $a_l$ restricted to the subspace $\wh\cK_n$ are
bounded with operator norm
\begin{equation}
\norm{ a_l\upharpoonright_{\wh\cK_n} \!\!\!\phantom{\Big|}} =
\sqrt{-2\partial_l^2 C(0)}\sqrt{n}.
\end{equation}
Furthermore, as the notation $a^*_l$ and $a_l$ suggests, these operators are adjoint of each other.
\end{itemize}

Since all computations are performed in the representation $\wh\cK$, we give a
common core for all the unbounded operators defined above -- and some others to
appear in future sections:
\begin{equation}
\label{coredefin}
\wh\cC:=\oplus_{n=0}^\infty \wh\cC_n,
\qquad
\wh\cC_n:=
\{\wh u \in \wh\cK_n: \sup_{\vp\in\R^{dn}}\abs{\wh u(\vp)} < \infty \}.
\end{equation}
Note that the operator $\abs{\Delta}^{-1/2}$ is defined on the dense subspace
$\wh\cC$ \emph{only for} $d\ge3$. Furthermore, in dimensions $d\ge3$, the
operators $\abs{\Delta}^{-1/2}\upharpoonright_{\wh\cK_n}$ defined on the dense
subspaces $\wh\cC_n$, are \emph{essentially self-adjoint}. This follows, e.g.\
from Propositions VIII.1, VIII.2 of \cite{reed_simon_vol1_80}.

Notice also that $\nabla$ is the infinitesimal generator of the \emph{unitary group of spatial translations} while $\Delta$ is the infinitesimal generator of the Markovian semigroup of \emph{diffusion in random scenery}
\begin{align}
\label{shiftgroup}
&
\exp\{z\nabla\}=T_z, \qquad
&&
T_zf(\omega):=f(\tau_z\omega),
\\
\label{drscesemigroup}
&
\exp\{t\Delta\}=Q_t, \qquad
&&
Q_tf(\omega):=\int\frac{\exp\{-z^2/(2t)\}}{\sqrt{2\pi t}}
f(\tau_z\omega)\,\d z.
\end{align}

Next, we express the infinitesimal generator \eqref{infgen_p} of the semigroup
of the process $t\mapsto\eta(t)$ acting on $\cL^2(\Omega, \pi)$. Using the
\emph{directional derivative formula} of Malliavin calculus and standard
commutation relations for the operators $\nabla_l, a^*_l, a_l$, $l=1,\dots,d$,
we get the expression
\begin{equation}
\label{infgen_poly_in_H}
G
:=
\frac12\Delta +
\sum_{l=1}^d \big(a^*_l \nabla_l + \nabla_l a_l\big).
\end{equation}
In order to spare space, we omit the more or less routine computations. For
details, we refer the reader to \cite{horvath_toth_veto_09} and
\cite{tarres_toth_valko_09}. This operator is well defined on Wick polynomials
of the field $\omega(x)$ and is extended by linearity and graph closure. One
can check that it satisfies the criteria of the Hille\,--\,Yoshida theorem (see
\cite{reed_simon_vol1_80}) and thus it is indeed the infinitesimal generator of
a Markovian semigroup. We omit these technical details.

The adjoint generator is
\begin{equation}
\label{adjinfgen_poly_in_H}
G^*
:=
\frac12\Delta -
\sum_{l=1}^d \big(a^*_l \nabla_l + \nabla_l a_l\big).
\end{equation}
Note that, due to the inner coherence of the model, the various terms  of the
infinitesimal generator \eqref{infgen_p} combine to give the tidy skew
self-adjoint part of the infinitesimal generators in \eqref{infgen_poly_in_H}
and \eqref{adjinfgen_poly_in_H}.

The symmetric (self-adjoint) and anti-symmetric (skew self-adjoint) parts of
the generator which were introduced in \eqref{symantisym} for the TSAW are now
\begin{equation}
\label{SA:SRBP}
S=-\frac12\Delta,\qquad A=\sum_{l=1}^d \big(a^*_l \nabla_l +
\nabla_l a_l\big) =: A_++A_-.
\end{equation}
It is a standard -- though not completely trivial -- exercise to check that in $d\ge3$ the
operators $S$ and $A$, a priori defined on the dense subspace $\wh\cC$, are
indeed essentially self-adjoint, respectively, essentially skew self-adjoint.

Note that
\begin{equation}
\label{grading}
S:\cH_n\to\cH_n,
\quad
A_+:\cH_n\to\cH_{n+1},
\quad
A_-:\cH_n\to\cH_{n-1},
\quad
A_{\mp}=-A_{\pm}^*,
\end{equation}
and
\begin{equation}
\label{van_H0_H1}
S\upharpoonright_{\cH_0}=0,
\qquad
A_+\upharpoonright_{\cH_0}=0,
\qquad
A_-\upharpoonright_{\cH_0\oplus\cH_1}=0.
\end{equation}

The proof of stationarity and ergodicity is very similar to the lattice case: Yaglom
reversibility \eqref{yaglom_TSAW} holds also here, which gives the stationarity
of the measure $\pi$. Ergodicity follows from the same argument as in
\eqref{erg}. This proves Proposition \ref{prop:stat_erg_poly} and Corollary
\ref{cor:lln_poly}.

\section{CLT for additive functionals of ergodic Markov processes, graded sector condition}
\label{s:KV}

In the present section, we recall the non-reversible version of the
Kipnis\,--\,Varadhan CLT for additive functionals of ergodic Markov processes
and present a slightly enhanced version of the \emph{graded sector condition}
of Sethuraman, Varadhan and Yau, \cite{sethuraman_varadhan_yau_00}.

Let $(\Omega, \cF, \pi)$ be a probability space: the state space of a
\emph{stationary and ergodic} Markov process $t\mapsto\eta(t)$. We put
ourselves in the Hilbert space $\cH:=\cL^2(\Omega, \pi)$. Denote the
\emph{infinitesimal generator} of the semigroup of the process by $G$, which is
a well-defined (possibly unbounded) closed linear operator on $\cH$. The
adjoint generator $G^*$ is the infinitesimal generator of the semigroup of the
reversed (also stationary and ergodic) process $\eta^*(t)=\eta(-t)$. It is
assumed that $G$ and $G^*$ have a \emph{common core of definition}
$\cC\subseteq\cH$. Let $f\in\cH$, such that $(f, \one) = \int_\Omega
f\,\d\pi=0$. We ask about CLT/invariance principle for
\begin{equation}
\label{rescaledintegral}
N^{-1/2}\int_0^{Nt} f(\eta(s))\,\d s
\end{equation}
as $N\to\infty$.

We denote the \emph{symmetric} and \emph{anti-symmetric} parts of the generators $G$, $G^*$, by
\begin{equation}
S:=-\frac12(G+G^*),
\qquad
A:=\frac12(G-G^*).
\end{equation}
These operators are also extended from $\cC$ by graph closure and it is assumed that they are well-defined self-adjoint, respectively, skew self-adjoint operators
\begin{equation}
S^*=S\ge0, \qquad A^*=-A.
\end{equation}
Note that $-S$ is itself the infinitesimal generator of a Markovian semigroup
on $\cL^2(\Omega,\pi)$, for which the probability measure $\pi$ is reversible
(not just stationary). We assume that $-S$ is itself ergodic:
\begin{equation}
\label{Sergodic}
\mathrm{Ker}(S)=\{c1\!\!1 : c\in\C\}.
\end{equation}

We denote by $R_\lambda\in\cB(\cH)$ the resolvent of the semigroup $s\mapsto e^{sG}$:
\begin{equation}
R_\lambda
:=
\int_0^\infty e^{-\lambda s} e^{sG}\d s
=
\big(\lambda I-G\big)^{-1}, \qquad \lambda>0,
\end{equation}
and, given $f\in\cH$ as above, we will use the notation
\begin{equation}
u_\lambda:=R_\lambda f.
\end{equation}

The following theorem yields the efficient martingale approximation of the additive functional \eqref{rescaledintegral}:

\begin{theorem*}[\bf KV]
\label{thm:kv}
With the notation and assumptions as before, if the following two limits hold in $\cH$:
\begin{align}
\label{conditionA}
&
\lim_{\lambda\to0}
\lambda^{1/2} u_\lambda=0,
\\
\label{conditionB}
&
\lim_{\lambda\to0} S^{1/2} u_\lambda=:v\in\cH,
\end{align}
then
\begin{equation}
\label{kv_variance}
\sigma^2:=2\lim_{\lambda\to0}(u_\lambda,f)\in[0,\infty),
\end{equation}
and there exists a zero mean, $\cL^2$-martingale $M(t)$ adapted to the filtration of the Markov process $\eta(t)$ with stationary and ergodic increments and variance
\begin{equation}
\expect{M(t)^2}=\sigma^2t
\end{equation}
such that
\begin{equation}
\label{kv_martappr}
\lim_{N\to\infty} N^{-1} \expect{\big(\int_0^N
f(\eta(s))\,\d s-M(N)\big)^2} =0.
\end{equation}
In particular, if $\sigma>0$, then the finite dimensional marginal distributions of the rescaled process $t\mapsto \sigma^{-1} N^{-1/2}\int_0^{Nt}f(\eta(s))\,\d s$ converge to those of a standard $1d$ Brownian motion.
\end{theorem*}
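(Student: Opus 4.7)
The plan is to construct an explicit martingale approximation using the resolvent $u_\lambda = R_\lambda f$ and then send $\lambda \to 0$ along a scaling coupled to the time horizon. The starting point is the Dynkin/Itô decomposition: for each $\lambda>0$, the process
\begin{equation}
M_\lambda(t) := u_\lambda(\eta(t)) - u_\lambda(\eta(0)) - \int_0^t G u_\lambda(\eta(s))\,\d s
\end{equation}
is a zero-mean $\cL^2$-martingale adapted to the natural filtration of $\eta$, with stationary increments (because the measure $\pi$ is stationary) and quadratic variation rate
\begin{equation}
\frac{\d}{\d t}\expect{M_\lambda(t)^2} = 2(u_\lambda, S u_\lambda) = 2\norm{S^{1/2}u_\lambda}^2.
\end{equation}
Substituting the resolvent identity $Gu_\lambda = \lambda u_\lambda - f$ rearranges the decomposition into
\begin{equation}
\int_0^t f(\eta(s))\,\d s = M_\lambda(t) + u_\lambda(\eta(0)) - u_\lambda(\eta(t)) + \lambda\int_0^t u_\lambda(\eta(s))\,\d s.
\end{equation}

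Next I would estimate each non-martingale term with $t=N$ and $\lambda=1/N$. For the boundary terms, stationarity gives $\expect{u_\lambda(\eta(t))^2} = \norm{u_\lambda}^2$, and by spectral calculus on the self-adjoint operator $S$ together with the elementary bound $\norm{u_\lambda}^2 \le \lambda^{-1}\norm{\lambda^{1/2} u_\lambda}^2$, condition \eqref{conditionA} forces $N^{-1}\norm{u_{1/N}}^2 \to 0$. For the integral term, Cauchy--Schwarz and stationarity give
\begin{equation}
\expect{\Big(\lambda\int_0^N u_\lambda(\eta(s))\,\d s\Big)^2} \le \lambda^2 N^2 \norm{u_\lambda}^2,
\end{equation}
so with $\lambda=1/N$ the rescaled error is bounded by $\lambda\norm{u_\lambda}^2 \to 0$, again by \eqref{conditionA}. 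This establishes \eqref{kv_martappr} along the sequence of martingales $M_{1/N}(N)$.

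To obtain a single limiting martingale $M(t)$ I would use condition \eqref{conditionB}: since $S^{1/2}u_\lambda$ is Cauchy in $\cH$ as $\lambda\to 0$, the martingales $M_\lambda$ form a Cauchy family in $\cL^2$ uniformly on compact time intervals, because
\begin{equation}
\expect{(M_\lambda(t)-M_\mu(t))^2} = 2t\,\norm{S^{1/2}(u_\lambda-u_\mu)}^2.
\end{equation}
The $\cL^2$-limit $M(t)$ inherits the zero-mean martingale property and stationary increments, with
\begin{equation}
\expect{M(t)^2} = 2t\,\norm{v}^2.
\end{equation}
A short computation identifies the variance with the stated formula: since $f = (\lambda I - G)u_\lambda$ and $A$ is skew-adjoint on the common core $\cC$,
\begin{equation}
(u_\lambda,f) = \lambda\norm{u_\lambda}^2 + \norm{S^{1/2}u_\lambda}^2 \longrightarrow \norm{v}^2,
\end{equation}
so $\sigma^2 = 2\lim_{\lambda\to 0}(u_\lambda,f) = 2\norm{v}^2 \in [0,\infty)$, matching the martingale variance rate. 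Ergodicity of the increments of $M$ follows from ergodicity of the underlying shift on path space.

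Finally, with the martingale approximation in hand, the finite-dimensional CLT for $\sigma^{-1}N^{-1/2}\int_0^{Nt}f(\eta(s))\,\d s$ reduces, via \eqref{kv_martappr} and Slutsky, to the CLT for $\sigma^{-1}N^{-1/2}M(Nt)$, which is a standard consequence of the CLT for martingales with stationary, ergodic, $\cL^2$ increments. The main obstacle I anticipate is the careful bookkeeping in the Cauchy argument for $M_\lambda$: one needs simultaneous $\cL^2$ control over a family of martingales defined on the same probability space but with differing time parameters, and the convergence statement must be uniform enough on compact intervals to pass to the martingale CLT. Conditions \eqref{conditionA} and \eqref{conditionB} are precisely tailored to make this uniform control work, and nothing beyond the Hilbert-space structure of $\cH$ is needed for the argument.
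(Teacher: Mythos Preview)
The paper does not actually supply its own proof of Theorem~KV: it is quoted as a background result, with the remark that its proof ``follows the original proof of the Kipnis--Varadhan theorem with the difference that spectral calculus is to be replaced by resolvent calculus,'' and with references to \cite{kipnis_varadhan_86}, \cite{toth_86}, \cite{varadhan_96}. Your proposal is exactly this resolvent-based argument, carried out correctly: the Dynkin decomposition of $u_\lambda$, the error control via condition~\eqref{conditionA}, the Cauchy property of $(M_\lambda)_\lambda$ via condition~\eqref{conditionB}, the identification $\sigma^2=2\norm{v}^2$, and the reduction to the martingale CLT are all in order. The one step you leave implicit --- passing from the approximation by $M_{1/N}(N)$ to the approximation by the limiting $M(N)$ --- is immediate from $N^{-1}\expect{(M_{1/N}(N)-M(N))^2}=2\norm{S^{1/2}u_{1/N}-v}^2\to0$, so there is no gap.
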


\begin{remarks}
\newcounter{szaml6}
\begin{list}
{(\arabic{szaml6})}{\usecounter{szaml6}\setlength{\leftmargin}{1em}}
\item
Conditions \eqref{conditionA} and \eqref{conditionB} of the theorem are jointly
equivalent to the following
\begin{equation}
\label{conditionC}
\lim_{\lambda,\lambda'\to0}(\lambda+\lambda')(u_\lambda,u_{\lambda'})=0.
\end{equation}
Indeed, straightforward computations yield:
\begin{equation}
\label{A+B=C}
(\lambda+\lambda')(u_\lambda,u_{\lambda'}) =
\norm{S^{1/2}(u_\lambda-u_{\lambda'})}^2 + \lambda \norm{u_\lambda}^2 +
\lambda' \norm{u_{\lambda'}}^2.
\end{equation}

\item
The theorem is a generalization to non-reversible setup of the
celebrated Kipnis\,--\,Varadhan theorem, \cite{kipnis_varadhan_86}. To the best of our knowledge, the non-reversible formulation, proved with resolvent rather
than spectral calculus, appears first -- in discrete-time Markov chain, rather than continuous-time Markov process setup and with condition \eqref{conditionC} -- in \cite{toth_86} where it was applied, with bare hand computations, to obtain CLT for a particular random walk in random environment. Its proof follows the original proof of the  Kipnis\,--\,Varadhan theorem with the difference that spectral calculus is to be replaced by resolvent calculus.

\item
In continuous-time Markov process setup, it was formulated in
\cite{varadhan_96} and applied to tagged particle motion in non-reversible zero
mean exclusion processes. In this paper, the \emph{(strong) sector condition}
was formulated, which, together with an $H_{-1}$-bound on the function
$f\in\cH$, provide sufficient condition for \eqref{conditionA} and
\eqref{conditionB} of Theorem KV to hold.

\item
In \cite{sethuraman_varadhan_yau_00}, the so-called \emph{graded sector
condition} is formulated and Theorem KV is applied to tagged particle diffusion
in general (non-zero mean) non-reversible exclusion processes in $d\ge3$. The
fundamental ideas related to the graded sector condition have their origin
partly in \cite{landim_yau_97}. In Theorem GSC below, we quote -- and slightly
enhance -- the formulation in \cite{olla_01} and
\cite{komorowski_landim_olla_09}.

\item
For a more complete list of applications of Theorem KV together with the strong
and graded sector conditions, see the surveys \cite{olla_01} and
\cite{komorowski_landim_olla_09}.

\end{list}
\end{remarks}

Checking conditions \eqref{conditionA} and \eqref{conditionB} (or, equivalently, condition \eqref{conditionC}) in particular applications is typically not easy. In the applications to RWRE in \cite{toth_86}, the conditions were checked by bare hand computations. In \cite{varadhan_96}, respectively, \cite{sethuraman_varadhan_yau_00}, the so-called \emph{sector condition}, respectively, the \emph{graded sector condition} were introduced and checked for the respective models.

We reformulate the graded sector condition from \cite{olla_01} and
\cite{komorowski_landim_olla_09} in a somewhat enhanced version. From abstract
functional analytic considerations, it follows that the next two conditions
jointly imply \eqref{conditionA} and \eqref{conditionB}:
\begin{gather}
\label{conditionH-1} f\in\text{Ran}(S^{1/2}),
\\
\label{conditionD}
\sup_{\lambda>0}
\norm{ S^{-1/2} Gu_\lambda} < \infty.
\end{gather}
Assume that the Hilbert space $\cH=\cL^2(\Omega, \pi)$ is graded
\begin{equation}
\label{grading2}
\cH=\overline{\oplus_{n=0}^\infty\cH_n},
\end{equation}
and the infinitesimal generator is consistent with this grading in the following sense:
\begin{align}
\label{Sgrading} S&=\sum_{n=0}^\infty\sum_{j=-r}^rS_{n,n+j},&
S_{n,n+j}&:\cH_n\to\cH_{n+j},& S_{n,n+j}^*&=S_{n+j,n},
\\
\label{Agrading} A&=\sum_{n=0}^\infty\sum_{j=-r}^rA_{n,n+j},&
A_{n,n+j}&:\cH_n\to\cH_{n+j},& A_{n,n+j}^*&=-A_{n+j,n}.
\end{align}
Here and in the sequel, the double sum $\sum_{n=0}^\infty\sum_{j=-r}^r \cdots$
is meant as\\ $\sum_{n=0}^\infty\sum_{j=-r}^r \ind{n+j\ge0} \cdots$.

\begin{theorem*}[\bf GSC]
\label{thm:gsc} Let the Hilbert space and the infinitesimal generator be graded
in the sense specified above. Assume that there exists an operator $D=D^*\geq0$
which acts diagonally on the grading of $\cH$:
\begin{equation}
D=\sum_{n=0}^\infty D_{n,n}, \qquad D_{n,n}: \cH_n\to\cH_n
\end{equation}
such that
\begin{equation}
\label{Dellipticity}
0\le D\leq S.
\end{equation}
Assume also that, with some $C<\infty$ and $2\le\kappa<\infty$, the following
bounds hold:
\begin{align}
\label{diagbound} \norm{D_{n,n}^{-1/2} (S_{n,n}+A_{n,n}) D_{n,n}^{-1/2}} &\leq
C n^{\kappa},
\\
\label{oddbound} \norm{ D_{n+j,n+j}^{-1/2} A_{n,n+j} D_{n,n}^{-1/2}} &\leq
\frac{n}{12 r^2 \kappa}+C,& j&=\pm1,\dots,\pm r,
\\
\label{evenbound} \norm{D_{n+j,n+j}^{-1/2} S_{n,n+j} D_{n,n}^{-1/2}} &\leq
\frac{n^2}{6 r^3 \kappa^2}+C,& j&=\pm1,\dots,\pm r,
\end{align}

Under these conditions on the operators, for any function $f\in \oplus_{n=0}^N \cH_n$, with some  $N<\infty$, if
\begin{equation}
\label{Dupperbound} D^{-1/2}f\in\cH,
\end{equation}
then \eqref{conditionH-1} and \eqref{conditionD} follow. As a consequence, the martingale approximation and CLT of Theorem KV hold.
\end{theorem*}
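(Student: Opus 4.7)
The plan is to verify the two conditions \eqref{conditionH-1} and \eqref{conditionD}; by the abstract functional-analytic fact already recorded above, this implies the KV hypotheses \eqref{conditionA} and \eqref{conditionB} and hence the martingale approximation. The ellipticity $0\le D\le S$ will be used twice: first, to obtain \eqref{conditionH-1} for free from the assumption $D^{-1/2}f\in\cH$; and second, to replace $S^{-1/2}$ by $D^{-1/2}$ in the bound on the antisymmetric contribution, where the graded off-diagonal estimates \eqref{diagbound}--\eqref{evenbound} can be deployed sector by sector.

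Condition \eqref{conditionH-1} follows at once from the form inequality $S^{-1}\le D^{-1}$ (valid on the orthogonal complement of $\mathrm{Ker}(D)\supseteq\mathrm{Ker}(S)$), yielding $\|S^{-1/2}f\|^2\le\|D^{-1/2}f\|^2<\infty$. Testing the resolvent equation $\lambda u_\lambda-Gu_\lambda=f$ against $u_\lambda$ and taking real parts (the antisymmetric piece $(u_\lambda,Au_\lambda)$ being purely imaginary) gives the standard a priori energy bound
\begin{equation*}
\lambda\|u_\lambda\|^2+\|S^{1/2}u_\lambda\|^2\le\|S^{-1/2}f\|^2\le\|D^{-1/2}f\|^2,
\end{equation*}
uniformly in $\lambda>0$. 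Writing $G=-S+A$ and using skew-symmetry of $A$ together with self-adjointness of $S$ yields the Pythagorean identity
\begin{equation*}
\|S^{-1/2}Gu_\lambda\|^2=\|S^{1/2}u_\lambda\|^2+\|S^{-1/2}Au_\lambda\|^2,
\end{equation*}
so that \eqref{conditionD} reduces to a uniform-in-$\lambda$ bound on $\|S^{-1/2}Au_\lambda\|^2$; using $S^{-1}\le D^{-1}$ once more, it suffices to bound $\sum_{n\ge0}\|D_{n,n}^{-1/2}(Au_\lambda)^{(n)}\|^2$.

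For this I would run a sector-by-sector analysis on the quantities $x_n:=\|D_{n,n}^{1/2}u_\lambda^{(n)}\|$. Projecting the resolvent equation onto $\cH_n$, testing against $u_\lambda^{(n)}$, and using $S_{n,n}\ge D_{n,n}$ on the diagonal together with Cauchy--Schwarz on the off-diagonal couplings through \eqref{oddbound}--\eqref{evenbound} produces an inequality roughly of the form
\begin{equation*}
\lambda\|u_\lambda^{(n)}\|^2+x_n^2\le |(u_\lambda^{(n)},f^{(n)})| + Cn^\kappa x_n^2 + \sum_{j=\pm1,\dots,\pm r}\Big(\tfrac{n^2}{6r^3\kappa^2}+\tfrac{n}{12r^2\kappa}+C\Big)\,x_n\,x_{n+j},
\end{equation*}
where the $Cn^\kappa$ term records the diagonal bound \eqref{diagbound}. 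Introducing weights $\alpha_n$ decaying suitably in $n$, multiplying by $\alpha_n$, summing over $n$ and applying Young's inequality to every cross term, the precise numerical factors $\tfrac{1}{12r^2\kappa}$ and $\tfrac{1}{6r^3\kappa^2}$ in the hypotheses are calibrated exactly so that the diagonal contribution $\sum_n\alpha_n x_n^2$ strictly dominates the off-diagonal, producing $\sum_n\alpha_n x_n^2<\infty$ uniformly in $\lambda$; inserting this back through the same off-diagonal norms yields $\sum_n\|D_{n,n}^{-1/2}(Au_\lambda)^{(n)}\|^2\le C(f)$ and hence \eqref{conditionD}.

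The hard part is the \emph{bookkeeping}: one must choose the weights $\alpha_n$ and the splitting parameters in the Young inequalities so that the polynomial prefactors $n,\,n^2,\,n^\kappa$, once multiplied by $\alpha_n$, are beaten by the diagonal $x_n^2$ with a strictly positive margin. Since $f\in\oplus_{n=0}^N\cH_n$, the inhomogeneous term $|(u_\lambda^{(n)},f^{(n)})|$ is supported on finitely many sectors and is absorbed by the a priori estimate $\sum_n x_n^2\le\|D^{-1/2}f\|^2$ obtained above; for $n>N$ the sector-wise inequality is homogeneous in the $x_k$, which is what makes closure of the recursion possible and what ultimately forces the tail $(x_n)_{n>N}$ to be summable against the weights $\alpha_n$.
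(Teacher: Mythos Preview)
Your overall architecture is right: reduce \eqref{conditionD} to a weighted bound $\sum_n t(n)^2 x_n^2<\infty$ with $x_n:=\|D^{1/2}u_\lambda^{(n)}\|$ and $t(n)\sim n^{\kappa}$, and the Pythagorean reduction to $\|S^{-1/2}Au_\lambda\|$ is correct. But the sector-by-sector inequality you write down does not close. First a minor point: no $Cn^{\kappa}x_n^2$ appears on the right --- the diagonal $A_{n,n}$-term has zero real part and the diagonal $S_{n,n}$-term sits on the left. More seriously, once you take absolute values on each off-diagonal coupling separately (Cauchy--Schwarz, then Young), the $S$-contribution carries the coefficient $\tfrac{n^2}{6r^3\kappa^2}$, so after weighting by $\alpha_n=t(n)^2\sim n^{2\kappa}$ and symmetrizing, the cross terms are of size $\sim n^{2\kappa+2}x_nx_{n+j}$, which cannot be absorbed into the diagonal $\sum_n n^{2\kappa}x_n^2$ for any choice of Young parameters. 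The numerics $\tfrac{1}{12r^2\kappa}$ and $\tfrac{1}{6r^3\kappa^2}$ are not calibrated for this scheme; the one for $S$ only makes sense if the accompanying $t$-factor is $(t(n)-t(n+j))^2\sim n^{2\kappa-2}$ rather than $t(n)^2\sim n^{2\kappa}$.

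The missing idea is to \emph{sum before estimating}, i.e.\ test the resolvent equation against $T^2u_\lambda$ globally with $T\!\upharpoonright_{\cH_n}=t(n)I$, and then exploit the symmetry of $S$ and skewness of $A$ on the resulting commutators. The paper's identity
\[
\lambda(u_\lambda,T^2u_\lambda)+(Tu_\lambda,STu_\lambda)=(Tu_\lambda,Tf)-(Tu_\lambda,[A,T]u_\lambda)+(Tu_\lambda,[S,T]u_\lambda)
\]
is exactly your weighted sector sum \emph{with signs kept}. Because $A$ is skew, $(Tu_\lambda,[A,T]u_\lambda)=\tfrac12(u_\lambda,[A,T^2]u_\lambda)$ carries the factor $|t(n)^2-t(n+j)^2|\sim n^{2\kappa-1}$, which matches the linear bound \eqref{oddbound}. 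Because $S$ is self-adjoint, $(Tu_\lambda,[S,T]u_\lambda)=-\tfrac12(u_\lambda,[T,[T,S]]u_\lambda)$ carries the \emph{squared difference} $(t(n)-t(n+j))^2\sim n^{2\kappa-2}$, and this extra decay is precisely what absorbs the $n^2$ in \eqref{evenbound}. This decoupling of the $S$- and $A$-commutators is the whole point of the enhanced statement; your absolute-value bound treats $S$ and $A$ on the same footing and therefore cannot accommodate the quadratic growth allowed in \eqref{evenbound}.
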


\begin{remark}
In the original formulation of the graded sector condition (see
\cite{sethuraman_varadhan_yau_00}, \cite{komorowski_landim_olla_09} and
\cite{olla_01}), the bound  imposed in \eqref{evenbound} on the symmetric part
of the generator was of the same form as that imposed in \eqref{oddbound} on
the skew-symmetric part. We can go up to the bound of order $n^2$ (rather than
of order $n$) in \eqref{evenbound} due to decoupling of the estimates of the
self-adjoint and skew self-adjoint parts. The proof follows the main lines of
the original one with one extra observation which allows the enhancement
mentioned above. We present a \emph{sketchy proof} -- for the connoisseur --
below.
\end{remark}

\begin{proof}
We present a sketchy proof following the main steps and notations used in
\cite{olla_01} or \cite{komorowski_landim_olla_09} and \emph{emphasizing only
that point where we gain slightly more in the upper bound imposed in
\eqref{evenbound}}. The expert should jump directly to comparing the bounds
\eqref{secondright1} and \eqref{thirdright1} below.

Let
\begin{equation}
f=\sum_{n=0}^N f_n,
\qquad
u_\lambda=\sum_{n=0}^\infty u_{\lambda n},
\qquad
f_n,u_{\lambda n}\in \cH_n.
\end{equation}
From \eqref{diagbound}, \eqref{oddbound} and \eqref{evenbound}, it easily
follows that
\begin{equation}
\label{kappaineq} \left\| S^{-1/2}Gu_\lambda\right\|^2\leq C \sum_n n^{2\kappa}
\left\| D^{1/2}u_{\lambda n}\right\|^2
\end{equation}
with some $C<\infty$. So it suffices to prove that the right-hand side of \eqref{kappaineq} is bounded, uniformly in $\lambda>0$.

Let
\begin{equation}
t(n): =
n_1^\kappa \ind{0\le n< n_1}
+
n^\kappa \ind{n_1\le n \le n_2}
+
n_2^\kappa \ind{n_2< n <\infty}
\end{equation}
with the values of $0< n_1<n_2<\infty$ to be fixed later, and define the bounded linear operator $T:\cH\to\cH$,
\begin{equation}
\label{tdefin}
T\upharpoonright_{\cH_n}=t(n)I\upharpoonright_{\cH_n}.
\end{equation}

We start with the identity
\begin{equation}
\label{abcde}
\lambda( u_\lambda, T u_\lambda)
+
( T u_\lambda, STu_\lambda)
=
( T u_\lambda, Tf)
-
( T u_\lambda, [A,T]u_\lambda).
+
( T u_\lambda, [S,T]u_\lambda)
\end{equation}
obtained from the resolvent equation by straightforward manipulations. We
point out here that separating the last two terms on the right-hand side rather
than handling them jointly as $( T u_\lambda, [T,G]u_\lambda)$ (as done in the
original proof) will allow for gain in the upper bound imposed in
\eqref{evenbound}.

Just as in the original proof, we get the bounds:
\begin{align}
\label{firstleft} \lambda ( T u_\lambda, T u_\lambda ) &= \lambda\sum_n
t(n)^2\norm{u_{\lambda n}}^2 \geq 0,
\\
\label{secondleft} ( T u_\lambda, STu_\lambda ) &\geq \sum_n
t(n)^2\norm{D^{1/2}u_{\lambda n}}^2,
\\
\label{firstright} ( T u_\lambda, Tf ) &\leq \frac14 \sum_n
t(n)^2\norm{D^{1/2}u_{\lambda n}}^2 + \sum_n t(n)^2\norm{D^{-1/2}f_n}^2.
\end{align}

Now, the last two terms on the right-hand side of \eqref{abcde} follow. The
second term (containing $A$) is treated just like in the original proof, the
third term (containing $S$) slightly differently.
\begin{align}
\label{secondright1}
(Tu_\lambda, [A,T]u_\lambda)
&
=
\frac12
(u_\lambda, (AT^2-T^2A)u_\lambda)
\\
\notag
&=
\frac12 \sum_n\sum_{j=-r}^{r}
\left(t(n)^2-t(n+j)^2\right)
(u_{\lambda (n+j)}, A_{n,n+j}u_{\lambda n})
\\
\notag & \le \frac12 \sum_n\sum_{j=-r}^{r} \abs{t(n)^2-t(n+j)^2}
\left(\frac{n}{12 r^2 \kappa }+C\right) \norm{D^{1/2} u_{\lambda n}}
\norm{D^{1/2} u_{\lambda (n+j)}},
\\[5pt]
\label{thirdright1}
(Tu_\lambda, [S,T]u_\lambda)
&
=
\frac12
(u_\lambda, (2TST-ST^2-T^2S)u_\lambda)
\\
\notag
&
=
-\frac12 \sum_n\sum_{j=-r}^{r}
\big(t(n)-t(n+j)\big)^2
(u_{\lambda (n+j)}, S_{n,n+j}u_{\lambda n})
\\
\notag
&
\le
\frac12 \sum_n\sum_{j=-r}^{r}
\big(t(n)-t(n+j)\big)^2
\left(\frac{n^2}{6 r^3 \kappa^2}+C\right)
\norm{D^{1/2} u_{\lambda n}} \norm{D^{1/2} u_{\lambda (n+j)}}.
\end{align}
Note the difference between the coefficients in the middle lines of
\eqref{secondright1}, respectively, \eqref{thirdright1}. Choosing $n_1$
sufficiently large, we get
\begin{align}
\sup_n\max_{-r\le j\le r} \frac{\abs{t(n)^2-t(n+j)^2}}{t(n)^2}
\left(\frac{n}{12 r^2 \kappa }+ C\right) &\le \frac1{2(2r+1)},
\\
\sup_n\max_{-r\le j\le r} \frac{\big(t(n)-t(n+j)\big)^2}{t(n)^2} \left(
\frac{n^2}{6 r^3 \kappa^2} + C \right) &\le \frac1{2(2r+1)}.
\end{align}
and hence, via another Schwarz,
\begin{align}
\label{secondandthirdright}
\abs{(Tu_\lambda, [A,T]Tu_\lambda)}
+
\abs{(Tu_\lambda, [S,T]Tu_\lambda)}
\le
\frac12
\sum_n t(n)^2\norm{D^{1/2}u_{\lambda n}}^2.
\end{align}
Putting \eqref{abcde}, \eqref{firstleft}, \eqref{secondleft},
\eqref{firstright} and \eqref{secondandthirdright} together, we obtain:
\begin{align}
\sum_n t(n)^2 \norm {D^{1/2}u_{\lambda n}}^2
\leq
4 \sum_n t(n)^2 \norm{D^{-1/2}f_n}^2
=
4 \sum_{n=0}^N t(n)^2 \norm{D^{-1/2}f_n}^2.
\end{align}
Finally, letting $n_2\to\infty$, we get indeed \eqref{conditionD} via \eqref{Dupperbound} and \eqref{kappaineq}.
\end{proof}

\section{Diffusive bounds}
\label{s:diffusive_bounds}

\subsection{TSAW, general}
\label{ss:diffusive_bounds_TSAW}

We write the displacement $X(t)$ in the standard martingale + compensator decomposition:
\begin{align}
\label{martingale+compensator}
X(t)=N(t)+M(t)+ \int_0^t\ol\varphi(\eta(s))\,\d
s+ \int_0^t\wt\varphi(\eta(s))\,\d s.
\end{align}
Here, $N(t)$ is the martingale part due to the jump rates $\gamma$ and $M(t)$
is the martingale part due to the jump rates $w-\gamma$.

The compensators are
\begin{align}
\label{phi_bar}
&
\ol\varphi:\Omega\to\R^d,
&&
\ol\varphi_l(\omega)=
s(\omega(0)-\omega(e_l))-s(\omega(0)-\omega(-e_l)),
\\
\label{phi_tilde}
&
\wt\varphi:\Omega\to\R^d,
&&
\wt\varphi_l(\omega)=
r(\omega(0)-\omega(e_l))-r(\omega(0)-\omega(-e_l)).
\end{align}
Note that, since $s(\cdot)$ is \emph{even}, $\ol\varphi_l$, $l=1,\dots,d$ are
actually \emph{gradients}:
\begin{align}
\label{phi_bar_is_grad}
\ol\varphi_l=\nabla_l\psi_l
\quad\text{ where }\quad
\psi_l:\Omega\to\R, \quad \psi_l(\omega):=s(\omega(0)-\omega(-e_l)).
\end{align}

The \emph{diffusive lower bound} follows simply from \emph{ellipticity}
\eqref{ellipticity}. Indeed, it is straightforward that the martingale $N(t)$
in the decomposition \eqref{martingale+compensator} is uncorrelated with the
other terms. Hence, the lower bound in \eqref{diffusive_bounds}.

The main point is the \emph{diffusive upper bound} which is more subtle. Since
the martingale terms in \eqref{martingale+compensator} scale diffusively, we
only need to prove diffusive upper bound for the compensators. From standard
variational arguments, it follows (see e.g.\ \cite{komorowski_landim_olla_09},
\cite{olla_01} and \cite{sethuraman_varadhan_yau_00}) that
\begin{align}
\label{variational_bound}
\varlimsup_{t\to\infty}t^{-1}\expect{(\int_0^t\varphi(\eta(s))\,\d s)^2} \le
2(\varphi, S^{-1} \varphi).
\end{align}
In our particular case, from \eqref{symm_gen}, it follows that it is sufficient
to prove upper bounds on $(\ol\varphi, (-\Delta)^{-1} \ol\varphi)$ and
$(\wt\varphi, (-\Delta)^{-1} \wt\varphi)$. The first one drops out from
\eqref{phi_bar_is_grad}:
\begin{align}
\label{bound_on_phibar}
(\ol\varphi_l, (-\Delta)^{-1} \ol\varphi_l) =
(\nabla_l\psi_l, (-\Delta)^{-1} \nabla_l\psi_l) \le \norm{\psi_l}^2=
\expect{s(\omega(0)-\omega(e_l))^2}.
\end{align}
We need
\begin{align}
\label{bound_on_s_square}
\expect{s(\omega(0)-\omega(e_l))^2}<\infty.
\end{align}

In Lemma \ref{lemma:brascamp_lieb} below, we formulate a direct consequence of
Brascamp\,--\,Lieb inequality which will be used for proving
\eqref{bound_on_s_square} and also for the diffusive bound on the second
integral on the right-hand side of \eqref{martingale+compensator}.

Denote
\begin{align}
\label{Z_lambda}
Z(\lambda):=
\expect{\exp\{\lambda(\omega(0)-\omega(e))^2\}}\in[1,\infty].
\end{align}

\begin{lemma}
\label{lemma:brascamp_lieb}
For any smooth cylinder function $F:\Omega\to\R$ and $\lambda\in[0,c)$:
\begin{align}
\label{brascamp-lieb} & Z(\lambda) \expect{F(\omega)^2\exp\{ \lambda
(\omega(0)-\omega(e))^2\}}
\\
\notag & \hskip20mm \le\frac{1}{c-\lambda} Z(\lambda) \expect{\sum_{x,y\in\Z^d}
\partial_xF(\omega)
(-\Delta)^{-1}_{xy}
\partial_yF(\omega)
\exp\{\lambda(\omega(0)-\omega(e))^2\}}
\\
\notag & \hskip40mm +\expect{F(\omega)\exp\{ \lambda
(\omega(0)-\omega(e))^2\}}^2.
\end{align}
\end{lemma}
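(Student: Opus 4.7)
The plan is to recognize \eqref{brascamp-lieb} as a Brascamp--Lieb-type variance inequality for the tilted Gibbs measure
\[
\tilde\pi_\lambda(\d\omega) := Z(\lambda)^{-1}\exp\{\lambda(\omega(0)-\omega(e))^2\}\,\d\pi(\omega).
\]
Dividing both sides of \eqref{brascamp-lieb} by $Z(\lambda)^2$ and using that $\mathbb{E}[F\exp\{\lambda(\omega(0)-\omega(e))^2\}]^2 = Z(\lambda)^2\,\mathbb{E}_{\tilde\pi_\lambda}(F)^2$ turns the claim into the equivalent variance bound
\[
\mathrm{Var}_{\tilde\pi_\lambda}(F) \;\le\; \frac{1}{c-\lambda}\;\mathbb{E}_{\tilde\pi_\lambda}\Big[\sum_{x,y\in\Z^d} \partial_x F\,(-\Delta)^{-1}_{xy}\,\partial_y F\Big].
\]

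Next I would pass to a finite box $\Lambda\subset\Z^d$ with boundary data $\bar\omega$ sampled from $\pi$, where the tilted finite-volume specification $\tilde\pi_{\lambda,\Lambda,\bar\omega}$ has Lebesgue density $\propto\exp\{-V_\lambda(\omega_\Lambda)\}$ with $V_\lambda = H^{\bar\omega}_\Lambda - \lambda(\omega(0)-\omega(e))^2$. Direct differentiation gives the edge-sum representation
\[
\langle f,\mathrm{Hess}(V_\lambda)f\rangle = \!\!\sum_{\{x,y\}:\,|x-y|=1}\!R''(\omega(x)-\omega(y))(f(x)-f(y))^2 \;-\; 2\lambda(f(0)-f(e))^2,
\]
and the strict convexity $R''(u)=r'(u)>c$ from \eqref{convexity}, combined with the identity $\langle f,-\Delta_\Lambda f\rangle = \sum_{\{x,y\}}(f(x)-f(y))^2$, yields
\[
\langle f,\mathrm{Hess}(V_\lambda)f\rangle \;\ge\; c\,\langle f,-\Delta_\Lambda f\rangle - 2\lambda(f(0)-f(e))^2.
\]
The key ingredient is the sharp rank-one estimate
\[
(f(0)-f(e))^2 \;\le\; \frac{1}{d}\,\langle f,-\Delta_\Lambda f\rangle,
\]
which follows from $\langle\delta_0-\delta_e,(-\Delta)^{-1}(\delta_0-\delta_e)\rangle = 2(C(0)-C(e)) = 1/d$, obtained in turn by Fourier symmetry (each $(1-\cos p_l)/\sum_{l'}(1-\cos p_{l'})$ averages to $1/d$ over the Brillouin zone). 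Feeding this in lifts the Hessian bound to
\[
\mathrm{Hess}(V_\lambda)\;\ge\;\Big(c-\frac{2\lambda}{d}\Big)(-\Delta_\Lambda)\;\ge\;(c-\lambda)(-\Delta_\Lambda),\qquad d\ge2,\ \lambda\in[0,c).
\]
The classical Brascamp--Lieb theorem for uniformly log-concave densities on $\R^{|\Lambda|}$ then delivers the finite-volume inequality
\[
\mathrm{Var}_{\tilde\pi_{\lambda,\Lambda,\bar\omega}}(F) \;\le\; \frac{1}{c-\lambda}\;\mathbb{E}_{\tilde\pi_{\lambda,\Lambda,\bar\omega}}\!\Big[\sum_{x,y\in\Lambda}\partial_xF\,(-\Delta_\Lambda)^{-1}_{xy}\,\partial_yF\Big].
\]

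Finally I would pass to the thermodynamic limit $\Lambda\nearrow\Z^d$: in $d\ge 3$ the Dirichlet Green's function $(-\Delta_\Lambda)^{-1}(x,y)$ converges pointwise to the (finite) infinite-volume Green's function $(-\Delta)^{-1}(x,y)$, the tilted finite-volume measures converge to $\tilde\pi_\lambda$ against smooth cylinder functionals, and since only finitely many $\partial_xF$ are nonzero a dominated-convergence argument closes the proof. The main obstacle is the rank-one comparison $(f(0)-f(e))^2\le \tfrac{1}{d}\langle f,-\Delta_\Lambda f\rangle$ with its sharp $1/d$ constant, which must be arranged to survive the box truncation; this is standard Green's-function analysis but requires care so that the dimension-dependent constant is not lost to boundary effects and so that $(c-2\lambda/d) \ge (c-\lambda)$ stays uniform in $\Lambda$.
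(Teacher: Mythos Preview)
Your proposal is correct and follows exactly the route the paper has in mind: the paper simply states that the lemma ``follows directly from Brascamp--Lieb inequality as stated in e.g.\ Proposition 2.1 in \cite{bobkov_ledoux_00}'' and omits the details, while you have supplied them---rewriting the claim as a variance bound under the tilted measure $\tilde\pi_\lambda$, verifying the Hessian lower bound $\mathrm{Hess}(V_\lambda)\ge(c-\lambda)(-\Delta)$, and passing to the infinite-volume limit. Your sharp rank-one estimate $(f(0)-f(e))^2\le d^{-1}\langle f,-\Delta f\rangle$ even gives the slightly better constant $(c-2\lambda/d)^{-1}$, and the finite-volume version holds a fortiori since the Dirichlet Green's function is dominated by the full-lattice one; the rest is routine.
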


Lemma \ref{lemma:brascamp_lieb} follows directly from Brascamp\,--\,Lieb inequality as stated in e.g.\ Proposition 2.1 in \cite{bobkov_ledoux_00}. We omit its proof.

In order to prove \eqref{bound_on_s_square}, choose
$F(\omega)=\omega(0)-\omega(e)$ in \eqref{brascamp-lieb} and note that the second term on the right-hand side of the inequality vanishes. We get
\begin{align}
\label{zdot}
\frac{\d}{\d \lambda} Z(\lambda) \le
\frac{\beta}{c-\lambda}
Z(\lambda)
\end{align}
with some explicit constant $\beta<\infty$. Hence, for $\lambda\in[0,c)$,
\begin{align}
\label{Z-finite}
Z(\lambda)\le (1-(\lambda/c))^{-\beta}<\infty.
\end{align}
Now, \eqref{bound_on_s_square} follows from \eqref{s_small} and \eqref{Z-finite}.

In order to get
\begin{align}
\label{bound_on_phitilde}
(\wt\varphi_l, (-\Delta)^{-1}\wt\varphi_l)<\infty,
\end{align}
more argument is needed. To keep notation simple, we fix $l=1$ and drop the subscript. Denote
\begin{align}
\label{phi_tilde_cov}
C(x):=\expect{\wt\varphi(\omega)\wt\varphi(\tau_x\omega)}, \qquad
\wh C(p):=\sum_{x\in\Z^d}e^{ip\cdot x}C(x), \,\,\, p\in[-\pi,\pi]^d.
\end{align}
The bound \eqref{bound_on_phitilde}  is equivalent to the infrared bound
\begin{align}
\label{bound_on_C_hat_1}
\int_{[-\pi,\pi]^d} \frac{\wh C(p)}{\wh D(p)}\,\d p <\infty
\end{align}
where $\wh D$ is the Fourier transform of the lattice Laplacian, defined in \eqref{lat_Lap_Four}. Since $d\ge 3$, it is sufficient to prove
\begin{align}
\label{bound_on_C_hat_2}
\sup_{p\in [-\pi,\pi]^d} \abs{\wh C(p)} <\infty.
\end{align}

\begin{lemma}
\label{lemma:covbounds}
\newcounter{szaml4}
\begin{list}{(\alph{szaml4})}{\usecounter{szaml4}\setlength{\leftmargin}{1em}}

\item
Let $f:\R\to\R$ be smooth and denote
\begin{align}
C(x)
&:=
\cov{f(\omega(0)-\omega(e))}{f(\omega(x)-\omega(x+e))},
\\
C^{\prime}(x)
&:=
\cov{f^{\prime}(\omega(0)-\omega(e))}{f^{\prime}(\omega(x)-\omega(x+e))},
\\
m^{\prime}
&:=
\expect{f^{\prime}(\omega(0)-\omega(e))}.
\end{align}
Then
\begin{align}
\label{covbound1}
\sup_{p\in[-\pi,\pi]^d} \abs{\wh C(p)}
\le
(cd)^{-1} \sup_{p\in[-\pi,\pi]^d} \abs{\wh C'(p)} + c^{-1}(m')^2.
\end{align}

\item
Let
\begin{align}
\label{defCnm}
C_{nm}(x)
:=
\cov{(\omega(0)-\omega(e))^n}{(\omega(x)-\omega(x+e))^m}.
\end{align}
Then
\begin{align}
\label{covbound2}
\sup_{p\in[-\pi,\pi]^d}\abs{\wh C_{nm}(p)} \le
(Z(c/2))^2 n! m! \left(\frac2{c}\right)^{(n+m)/2}.
\end{align}

\item
If $r$ is an entire function and it satisfies condition \eqref{r_entire},
then
\begin{equation}
\sup_{p\in[-\pi,\pi]^d}\abs{\wh C(p)}<\infty.
\end{equation}
\end{list}
\end{lemma}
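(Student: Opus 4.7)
My plan is to apply the Brascamp--Lieb inequality of Lemma \ref{lemma:brascamp_lieb} at $\lambda=0$ to the test function $F_\phi(\omega):=\sum_u\phi(u)[f(U_u)-m]$, where $U_x:=\omega(x)-\omega(x+e)$ and $m:=\expect{f(U_0)}$. By Plancherel the LHS is $(2\pi)^{-d}\int|\hat\phi(p)|^2\hat C(p)\,\d p$. For the RHS, I compute $\partial_a F_\phi=\phi(a)\tau_a f'(U)-\phi(a-e)\tau_{a-e}f'(U)$ and split $f'=m'+h$ with $h:=f'-m'$ centered, which kills the cross-terms in $\expect{\partial_a F_\phi\,\partial_b F_\phi}$. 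The deterministic part Fourier-transforms to $(m')^2(2\pi)^{-d}\int|\hat\phi(p)|^2\hat H(p)\,\d p$ with $\hat H(p):=(1-\cos p\cdot e)/\hat D(p)$; the random part, after the translations $(a,b)\mapsto(a+e,b+e)$ which identify pairs of terms, collapses to $\sum_{a,b}\phi(a)\phi(b)\,C'(a-b)\,H(a-b)$ with $H(x):=2G(x)-G(x-e)-G(x+e)$ and $G:=(-\Delta)^{-1}(\cdot,0)$. Its Fourier transform is the convolution $(2\pi)^{-d}(\hat C'\ast\hat H)(p)$, whose $L^\infty$ norm is bounded by $(2\pi)^{-d}\sup_q\hat C'(q)\int\hat H(q)\,\d q=d^{-1}\sup_q\hat C'(q)$, where I use the identity $\int(1-\cos q\cdot e)/\hat D(q)\,\d q=(2\pi)^d/d$ (which follows from $\sum_l(1-\cos q_l)=\hat D(q)$ and permutation symmetry). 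Using $\hat H\le 1$ pointwise for the deterministic piece and the integral identity for the random piece then yields \eqref{covbound1}, since $\hat C\ge 0$ and $\phi$ is arbitrary.

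\textbf{Part (b).} Since the stationary processes $(U_x^n-\bar U_n)_x$ and $(U_x^m-\bar U_m)_x$ are jointly stationary, positivity of the joint spectral density gives the Cauchy--Schwarz type bound $|\hat C_{nm}(p)|^2\le\hat C_{nn}(p)\hat C_{mm}(p)$; it therefore suffices to establish $a_n:=\sup_p\hat C_{nn}(p)\le Z(c/2)^2(n!)^2(2/c)^n$. Applying part (a) with $f(u)=u^n$ gives $C'=n^2C_{n-1,n-1}$ and $m'=n\bar U_{n-1}$, hence the recursion $a_n\le(cd)^{-1}n^2a_{n-1}+c^{-1}n^2|\bar U_{n-1}|^2$. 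Combined with the exponential moment bound $\expect{U^{2k}}\le k!\,Z(c/2)(2/c)^k$ (which is immediate from the power series expansion of $Z(c/2)$ together with \eqref{Z-finite}), this recursion closes by a direct induction on $n$, starting from $a_1\le 1/c$ (part (a) with $f(u)=u$, $C'=0$). The Cauchy--Schwarz bound on $|\hat C_{nm}|$ then yields \eqref{covbound2}.

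\textbf{Part (c).} Using oddness of $r$, I rewrite $\tilde\varphi_l(\omega)=r(U_0)+r(U_{-e_l})$ with $e:=e_l$, and since $r$ is entire I expand $r(u)=\sum_{n\ge 1}r^{(n)}(0)u^n/n!$. Taking covariances term by term and Fourier transforming,
\begin{equation*}
\hat C(p)=2(1+\cos p\cdot e)\sum_{n,m\ge 1}\frac{r^{(n)}(0)\,r^{(m)}(0)}{n!\,m!}\hat C_{nm}(p).
\end{equation*}
Substituting the bound \eqref{covbound2} and applying the triangle inequality factors the double sum as a square:
\begin{equation*}
\sup_p|\hat C(p)|\le 4\,Z(c/2)^2\Bigl(\sum_{n\ge 1}|r^{(n)}(0)|\,(2/c)^{n/2}\Bigr)^2,
\end{equation*}
which is finite by hypothesis \eqref{r_entire}.

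\textbf{Main obstacle.} The delicate step is the Fourier bookkeeping in part (a): identifying the lattice kernel $H$ and the simultaneous use of its pointwise bound $\sup_p\hat H(p)=1$ (applied to the deterministic $(m')^2$ term) together with the integral identity $\int\hat H(q)\,\d q=(2\pi)^d/d$ (applied to the convolution $\hat C'\ast\hat H$). This asymmetric treatment of $\hat H$ is exactly what produces the two distinct coefficients $c^{-1}$ and $(cd)^{-1}$ in \eqref{covbound1}; once (a) is established, parts (b) and (c) follow essentially mechanically from it and from the moment estimates already available.
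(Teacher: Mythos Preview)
Your proof is correct and follows essentially the same route as the paper's: in part (a) you apply the Brascamp--Lieb inequality (Lemma~\ref{lemma:brascamp_lieb} at $\lambda=0$) to a linear combination $\sum_x\phi(x)f(U_x)$, identify the kernel you call $H$ (the paper's $\Gamma=\nabla_1(-\Delta)^{-1}\nabla_1$ with $\wh\Gamma(p)=(1-\cos p\cdot e)/\wh D(p)$), and exploit exactly the same asymmetric pair of estimates $\sup_p\wh\Gamma=1$ and $(2\pi)^{-d}\int\wh\Gamma=d^{-1}$; parts (b) and (c) then proceed identically via the recursion from $f(u)=u^n$, the moment bound coming from $Z(c/2)<\infty$, the spectral Cauchy--Schwarz for $\wh C_{nm}$ (which is what the paper means by ``Schwarz's inequality''), and the term-by-term power series estimate. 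The only cosmetic differences are that you center $F_\phi$ by subtracting $m$ whereas the paper imposes $\sum_x\alpha(x)=0$, and you close the recursion in (b) by induction rather than by unrolling it into the explicit sum \eqref{covbound3}.
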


\begin{proof}
\newcounter{szaml5}
\begin{list}{(\alph{szaml5})}{\usecounter{szaml5}\setlength{\leftmargin}{1em}}

\item
We apply \eqref{brascamp-lieb} with $\lambda=0$ and
\begin{align}
\label{our_F}
F(\omega):=\sum_{x\in\Z^d}\alpha(x) f(\omega(x)-\omega(x+e))
\end{align}
where $\alpha:\Z^d\to\R$ is finitely supported and $\sum_{z\in\Z^d}\alpha(z)=0$. Straightforward computations yield
\begin{align}
\label{bound1}
\sum_{x,y\in\Z^d}\alpha(x)C(x-y)\alpha(y)
\le
c^{-1} \sum_{x,y\in\Z^d} \alpha(x) \Gamma(x-y) \big(C^{\prime}(x-y)+(m^{\prime})^2\big) \alpha(y)
\end{align}
where $\Gamma$ is the matrix
\begin{equation}
\label{matrix_R} \Gamma:=\nabla_1(-\Delta^{-1})\nabla_1
\end{equation}
well-defined in any dimension. Its Fourier transform is
\begin{equation}
\label{R_hat}
\wh \Gamma(p)=\frac{1-\cos p_1}{\wh D(p)}.
\end{equation}

The bound \eqref{bound1} is equivalent to
\begin{align}
\label{bound2}
\wh C(p) \le c^{-1}\left(\wh \Gamma*\wh C'(p) + (m')^2 \wh \Gamma(p) \right).
\end{align}
Convolution is meant periodically in $[-\pi,\pi]^d$. Hence,
\begin{align}
\notag
\sup_{p\in[-\pi,\pi]^d} \abs{\wh C(p)}
&
\le
c^{-1} \sup_{p\in[-\pi,\pi]^d} \abs{\wh C'(p)} \int_{[-\pi,\pi]^d}\wh \Gamma(p)\,\d p +
c^{-1}(m')^2 \sup_{p\in[-\pi,\pi]^d} \wh\Gamma(p)
\\
\label{bound3}
&
=
(cd)^{-1} \sup_{p\in[-\pi,\pi]^d} \abs{\wh C'(p)}+
c^{-1} (m')^2.
\end{align}

\item
We apply \eqref{covbound1} to the function $f(u)=u^n$ and use the notation
\begin{equation}
m_n:=\expect{(\omega(0)-\omega(e))^n}
\end{equation}
to get
\begin{equation}
\sup_{p\in[-\pi,\pi]^d} \abs{\wh C_{nn}(p)}
\le
(cd)^{-1} n^2 \sup_{p\in[-\pi,\pi]^d} \abs{\wh C_{n-1,n-1}(p)} + c^{-1} n^2 m_{n-1}^2.
\end{equation}
Induction on $n$ yields
\begin{equation}
\label{covbound3}
\sup_{p\in[-\pi,\pi]^d} \abs{\wh C_{nn}(p)}
\le
\sum_{k=1}^n
\frac{(n!)^2}{((n-k)!)^2} \frac{m_{n-k}^2}{c^kd^{k-1}}.
\end{equation}

By \eqref{Z-finite}, we have the finiteness of
\begin{equation}
\expect{\exp\left( {c} (\omega(0)-\omega(e))^2\right)/2}
=
Z({c}/2)
<\infty.
\end{equation}
Hence, by expanding the exponential,
\begin{equation}
m_n= \expect{(\omega(0)-\omega(e))^n} \le Z({c}/2) \frac{2^{n/2}\lfloor
n/2\rfloor!}{c^{n/2}}
\end{equation}
follows. We neglect the fact that the odd moments are $0$ by symmetry.
Combining the last inequality with \eqref{covbound3}, we obtain
\begin{align}
\label{covbound4}
\sup_{p\in[-\pi,\pi]^d} \abs{\wh C_{nn}(p)}
&\le
(Z({c}/2))^2(n!)^2 c^{-n}
\sum_{k=1}^n
\frac{(\lfloor (n-k)/2\rfloor!)^2}{((n-k)!)^2}
\frac{2^{n-k}}{d^{k-1}}
\\
&\le
(Z({c}/2))^2(n!)^2 \left(\frac2{c}\right)^n,
\end{align}
which proves \eqref{covbound2} for $n=m$. The constant $2/c$ is far from
optimal here, but the order $(n!)^2$ is the best one can get with this
argument.

The general case $n\not=m$ follows by Schwarz's inequality.

\item
By power series expansion of the entire function $r$, and using
\eqref{phi_tilde}, \eqref{covbound2} and \eqref{r_entire}, one gets
\begin{align}
\abs{\wh C(p)} &\le 4 \sum_{n=0}^\infty \sum_{m=0}^\infty
\frac{\abs{r^{(n)}(0)}}{n!} \frac{\abs{r^{(m)}(0)}}{m!} \abs{\wh C_{nm}(p)}
\\
\notag
&
\le
4(Z({c}/2))^2
\left(\sum_{n=0}^\infty
\abs{r^{(n)}(0)} \left(\frac2{c}\right)^{n/2}\right)^2<\infty.
\end{align}

\end{list}
\end{proof}

\begin{remark}
The diffusive upper bound is much simpler in the special Gaussian case. There,
since $r(u)=u$ the compensator function $\wt\varphi$ defined in
\eqref{phi_tilde} is also a gradient (easily seen) and thus, by an argument
identical to \eqref{bound_on_phibar}, we obtain the desired upper bound.
Something similar happens in the SRBP case in the next subsection.
\end{remark}

\subsection{SRBP, Gaussian}
\label{ss:diffusive_bounds_SRBP}

Now the martingale + compensator decomposition has the form
\begin{align}
\label{martingale+compensator_SRBP} X(t)=B(t) + \int_0^t \varphi(\eta_s)\,\d s
\end{align}
where
\begin{align}
\label{compansator_SRBP}
\varphi_l(\omega):=\partial_l\omega(0).
\end{align}

First, we prove a diffusive lower bound. For $s,t\in\R$ with $s<t$, let
\begin{equation}
\label{Mstdef}
M(s,t):=X(t)-X(s)-\int_s^t \varphi(\eta(u))\,\d
u=B(t)-B(s).
\end{equation}

\begin{lemma}
\label{lemma:forwbackw}
\newcounter{szaml7}
\begin{list}
{(\arabic{szaml7})}{\usecounter{szaml7}\setlength{\leftmargin}{1em}}
\item
Fix $s\in\R$. The process $[s,\infty)\ni t\mapsto M(s,t)$ is a forward
martingale with respect to the forward filtration $\{\cF_{(-\infty,t]}:t\ge
s\}$ of the process $t\mapsto\eta(t)$.

\item
Fix $t\in\R$. The process $(-\infty,t]\ni s\mapsto M(s,t)$ is a backward
martingale with respect to the backward filtration $\{\cF_{[s,\infty)}:s\le
t\}$ of the process $t\mapsto\eta(t)$.
\end{list}
\end{lemma}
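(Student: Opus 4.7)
The identity \eqref{Mstdef} does essentially all the work: integrating the SDE \eqref{Brpolydiff2} from $s$ to $t$ and recognizing $\varphi(\eta(u))=\grad(V*\ell(u,\cdot))(X(u))$ via \eqref{envir} and \eqref{compansator_SRBP} immediately gives $M(s,t)=B(t)-B(s)$. Thus both parts of the lemma reduce to showing that Brownian increments $B(t)-B(s)$ are forward, respectively backward $\cL^2$-martingales with respect to the two filtrations of the environment process $\eta$.

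Part (1) is straightforward. The forward filtration $\cF_{(-\infty,t]}$ of $\eta$ coincides (up to completion) with the natural filtration of $B$ up to time $t$ augmented by the initial profile $\ell(0,\cdot)$, since $X(\cdot)$ and hence $\eta(u,\cdot)$ for $u\le t$ are non-anticipating functionals of $B|_{(-\infty,t]}$ and of the initial data. For $s\le t'\le t$, the increment $B(t)-B(t')$ is a zero-mean Gaussian independent of $\cF_{(-\infty,t']}$, which yields $\expect{M(s,t)\mid\cF_{(-\infty,t']}}=M(s,t')$ immediately.

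Part (2) is the substantive statement and relies on Yaglom reversibility of the stationary environment process, i.e., the identity $JGJ=G^*$ with $J\omega:=-\omega$, noted after \eqref{SA:SRBP} to hold also in the SRBP case. Yaglom reversibility implies that the two-sided stationary process $(\eta(t))_{t\in\R}$ has the same distribution as its time-reversed, sign-flipped version $\wt\eta(t):=-\eta(-t)$. In particular, $\wt\eta$ is itself a stationary Brownian polymer process driven by some Brownian motion $\wt B$, and admits the analog of the decomposition \eqref{Mstdef}. Applying part (1) to $(\wt\eta,\wt B)$ produces a forward $\cL^2$-martingale with respect to the forward filtration of $\wt\eta$. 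Since this forward filtration at time $-s$ coincides, under the measurable bijection induced by $J$ and the time reversal $u\leftrightarrow-u$, with the backward filtration $\cF_{[s,\infty)}$ of the original process $\eta$, the forward-martingale identity for $\wt B$ translates directly into the desired backward-martingale identity for $B$ with respect to $\cF_{[s,\infty)}$.

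The main technical obstacle is setting up this time-reversal dictionary rigorously: identifying the reversed driving Brownian motion $\wt B$ and verifying term-by-term that the forward martingale decomposition of $\wt\eta$ corresponds, under $t\mapsto-t$ and $\omega\mapsto-\omega$, to the backward decomposition sought for $\eta$. (A direct It\^o computation starting from $\wt X(t):=X(-t)$ and $\wt\ell(t,\cdot):=-\ell(-t,\cdot)$ shows that $\wt B$ is not simply $-B(-\cdot)$ but differs from it by a drift correction involving $\grad(V*\wt\ell)(\wt X)$; its Brownian-motion property is guaranteed in the stationary regime by $JGJ=G^*$.) As an alternative that bypasses constructing $\wt B$ explicitly, one can invoke the Markov property of $\eta$ to reduce $\expect{B(s_2)-B(s_1)\mid\cF_{[s_2,\infty)}}$ to $\expect{B(s_2)-B(s_1)\mid\eta(s_2)}$ and then deduce its vanishing from the sign-flip invariance of the Gaussian measure $\pi$ combined with the Yaglom identity.
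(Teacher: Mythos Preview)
Your approach is correct and rests on the same idea as the paper's proof: Yaglom reversibility $JGJ=G^*$ applied to the stationary environment process. The only difference is packaging. You aim to construct the reversed driving Brownian motion $\wt B$ and then transfer the forward-martingale property of $\wt B$ back through the time-reversal dictionary; you correctly flag that identifying $\wt B$ pathwise is the nontrivial step. The paper sidesteps this entirely: instead of building $\wt B$, it computes the \emph{backward infinitesimal compensator} of $X$ directly. Using the three facts (i) $\wt X(t)-\wt X(s)=X(-t)-X(-s)$, (ii) $(\wt\eta(t))_t\stackrel{d}{=}(\eta(t))_t$, and (iii) $\varphi(-\omega)=-\varphi(\omega)$, one line gives
\[
\lim_{h\to0}(-h)^{-1}\condexpect{X(s-h)-X(s)}{\cF_{[s,\infty)}}
=-\varphi(\wt\eta(-s))=\varphi(\eta(s)),
\]
so the backward compensator of $X$ coincides with the forward one, and $M(s,t)$ is a backward martingale in $s$. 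This is exactly the computation hidden inside your ``translation'' step (and is equivalent to showing $\wt M(-t,-s)=-M(s,t)$), but it avoids ever naming $\wt B$. Your alternative via the Markov property and flip-invariance of $\pi$ is also viable but less transparent than the compensator calculation.
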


\begin{proof}
There is nothing to prove about the first statement: the integral on the right-hand side of \eqref{Mstdef} was chosen exactly so that it compensates the conditional expectation of the infinitesimal increments of $X(t)$.

We turn to the second statement, which does need a proof. This consists of the following ingredients:
\begin{enumerate}[(1)]

\item
The displacements are preserved on the flipped backward trajectories
$t\mapsto\wt\eta(t)$ defined in \eqref{rev-flip} for the TSAW:
\begin{equation}
\wt X(t)-\wt X(s)=X(-t)-X(-s).
\end{equation}

\item
The forward process $t\mapsto\eta(t)$ and flipped backward process $t\mapsto\wt\eta(t)$ are identical in law (Yaglom reversibility).

\item
The function $\omega\mapsto\varphi(\omega)$ is odd with respect to the flip-map $\omega\mapsto -\omega$.

\end{enumerate}

\noindent
Putting these facts together (in this order), we obtain
\begin{align}
\lim_{h\to0}(-h)^{-1}\condexpect{X(s-h)-X(s)}{\cF_{[s,\infty)}} & =
\lim_{h\to0}h^{-1}\condexpect{-\wt X(-s+h)+\wt X(-s)}{\wt
\cF_{(-\infty,-s]}}\notag
\\
\label{bwmart} & = -\varphi(\wt\eta(-s)) = \varphi(\eta(s)).
\end{align}

\end{proof}

From Lemma \ref{lemma:forwbackw}, it follows directly that for any $s<t$, the
random variables $M(s,t)$ and $\int_s^t \varphi(\eta(u))\,\d u$ are
\emph{uncorrelated}, and therefore,
\begin{align}
\label{variancesum}
\expect{(X(t)-X(s))^2}
&=
\expect{(M(s,t))^2}+
\expect{\big(\int_s^t \varphi(\eta(u))\,\d u\big)^2}
\\
\notag
&=
(t-s) +
\expect{\big(\int_s^t \varphi(\eta(u))\,\d u\big)^2}.
\end{align}
Hence, the lower bound in \eqref{bounds}.

Next, we turn to the diffusive upper bound. We use the variational upper bound
\eqref{variational_bound} and note that, in our present case, $S=\abs{\Delta}$
and $\varphi$ is given in \eqref{compansator_SRBP}. Hence, by straightforward
computations, we get
\begin{equation}
\label{ourbound}
(\varphi_l, S^{-1}\varphi_l)
=
\int_{\mathbb{R}^d}
\frac{p_l^2}{\abs{p}^2}\,\wh V(p)\,\d p<\infty.
\end{equation}

\hfill\qed

\section{Checking the graded sector condition}
\label{s:check_gsc}

\subsection{TSAW}
\label{ss:check_gsc_TSAW}

We are ready to prove the second part of Theorem \ref{thm:diffusive_limit}. We
have to show that the martingale approximation of Theorem KV is  valid for the
integrals in on the right-hand side of \eqref{martingale+compensator}. We apply
the \emph{graded sector condition} formulated in Theorem GSC with  $D=\gamma
\abs{\Delta}$ and the operators $S$ and $A$ given in graded form in
\eqref{opsgrading} and \eqref{opagrading}. \eqref{Dellipticity} clearly holds
and \eqref{Dupperbound} was already proved in Section \ref{s:diffusive_bounds}.
We still need to verify conditions \eqref{diagbound}, \eqref{oddbound} and
\eqref{evenbound}.

Checking \eqref{evenbound} is straightforward: If $s(u)$ is an even polynomial
of degree $2q$, then using in turn \eqref{nonameopnorm}, \eqref{astaropnorm}
and \eqref{aopnorm}, we obtain
\begin{align}
\label{checkevenbound}
\norm{|\Delta|^{-1/2}\nabla_{-e} s(a_e+a_e^*) \nabla_e|\Delta|^{-1/2}\upharpoonright_{\cH_n} }
\le
\norm{ s(a_e+a_e^*) \upharpoonright_{\cH_n} }
\le
c n^q +C
\end{align}
with the constant $c$ proportional to the leading coefficient in the polynomial
$s(u)$ and $C<\infty$. Hence, if $q=2$ (that is: $s(u)$ quartic polynomial) and
the leading coefficient is sufficiently small, then \eqref{evenbound} follows.
The bound \eqref{diagbound} with $\kappa=2$ also drops out from
\eqref{checkevenbound}.

Finally, we check \eqref{oddbound}. By \eqref{nonameopnorm},
\begin{align}
\label{checkoddbound1} \norm{|\Delta|^{-1/2} a_{-e}^* \nabla_e|\Delta|^{-1/2}
\upharpoonright_{\cH_n} } \le \norm{|\Delta|^{-1/2}
a_e^*\upharpoonright_{\cH_n} }.
\end{align}
We prove
\begin{align}
\label{checkoddbound2} \norm{|\Delta|^{-1/2} a_e^*\upharpoonright_{\cH_n} } \le
C n^{1/2}
\end{align}
with some finite constant $C$.

For $\wh u\in \cH_n$,
\begin{multline}
\abs{\Delta}^{-1/2} a^*_e \wh u(p_1,\dots,p_{n+1})
\\
=\frac1{\sqrt{n+1}} \frac{1}{\sqrt{\wh D(\sum_{m=1}^{n+1} p_m)}}
\sum_{m=1}^{n+1}\left(e^{ip_m\cdot e}-1\right) \wh
u(p_1,\dots,\cancel{p_{m}},\dots,p_{n+1}).
\end{multline}
Hence,
\begin{align}
\label{bouTSAW} & \norm{\abs{\Delta}^{-1/2} a^*_e \wh u}^2
\\
\notag &= \frac1{n+1} \int_{(-\pi,\pi]^{d(n+1)}} \frac1{\wh D(\sum_{m=1}^{n+1}
p_m)}
\\
\notag &\hskip3cm\times \abs{\sum_{m=1}^{n+1} \left(e^{ip_m\cdot e}-1\right)
\wh u(p_1,\!\dots,\cancel{p_{m}},\dots,p_{n+1})}^2
\prod_{m=1}^{n+1}\frac{1}{\wh D(p_m)}\,\d p_1\dots\d p_{n+1}
\\
\notag &\le (n+1) \int_{(-\pi,\pi]^{d(n+1)}} \frac1{\wh D(\sum_{m=1}^{n+1}
p_m)}
\\
\notag &\hskip3cm\times \abs{e^{ip_{n+1}\cdot e}-1}^2 \abs{\wh
u(p_1,\dots,p_{n})}^2 \prod_{m=1}^{n+1}\frac{1}{\wh D(p_m)}\,\d p_1\dots\d
p_{n+1}
\\
\notag &= (n+1) \int_{(-\pi,\pi]^{dn}} \abs{\wh u(p_1,\dots,p_{n})}^2
\prod_{m=1}^{n}\frac{1}{\wh D(p_m)}
\\
\notag &\hskip3cm\times \left( \int_{(-\pi,\pi]^d} \frac{\abs{e^{ip_{n+1}\cdot
e}-1}^2} {\wh D(p_{n+1})} \frac{1} {\wh D(\sum_{m=1}^{n+1} p_m)} \d p_{n+1}
\right) \d p_1\dots\d p_{n}.
\end{align}
Schwarz's inequality and symmetry was used. Note that on the right-hand side, for the innermost term, since $d\ge3$, we have
\begin{align}
\int_{(-\pi,\pi]^d}
\frac{\abs{e^{ip_{n+1}\cdot e}-1}^2} {\wh D(p_{n+1})}
\frac{1} {\wh D(\sum_{m=1}^{n+1} p_m)}
\d p_{n+1}
\le C^2.
\end{align}
Hence,
\begin{equation}
\norm{\abs{\Delta}^{-1/2} a^*_e \wh u}^2 \le C^2 (n+1) \norm{\wh u}^2,
\end{equation}
and \eqref{checkoddbound2} follows. This proves \eqref{oddbound}.\qed

\subsection{SRBP}
\label{ss:check_gsc_SRBP}

As a first remark, note that condition \eqref{oddbound} follows from
\begin{equation}
\label{gsc2} \norm{S^{-1/2}A_+S^{-1/2}\upharpoonright_{\cH_n}}\le Cn^{\alpha}
\end{equation}
with $\alpha<1$ where the operator $S^{-1/2}A_+S^{-1/2}\upharpoonright_{\cH_n}$
is meant as first defined on a dense subspace of $\cH_n$ and extended by
continuity. In our case, the dense subspace will be $\wh \cC_n$ specified in
\eqref{coredefin} and
\begin{equation}
S^{-1/2}A_+S^{-1/2}
=
\sum_{l=1}^d \abs{\Delta}^{-1/2} a^*_l \nabla_l\abs{\Delta}^{-1/2}
\end{equation}
The operators $\nabla_l\abs{\Delta}^{-1/2}$ map the subspaces $\wh \cC_n$ to
themselves and are bounded, see \eqref{nonameopnormSRBP}. In order to bound the
norm of the operator $\abs{\Delta}^{-1/2} a^*_l:\cH_n\to\cH_{n+1}$, let $\wh
u\in\wh \cC_{n}$, then
\begin{equation}
\abs{\Delta}^{-1/2} a^*_l \wh u(p_1,\dots,p_{n+1}) = \frac{i}{\sqrt{n+1}}
\frac{1}{\abs{\sum_{m=1}^{n+1} p_m}} \sum_{m=1}^{n+1}p_{ml}\wh
u(p_1,\dots,\cancel{p_m},\dots,p_n).
\end{equation}
The rest of the calculations is a direct analogue to its TSAW counterpart, and is left to the reader. It turns out that
\begin{equation}
\norm{\abs{\Delta}^{-1/2} a^*_l\upharpoonright_{\cH_n}} \le C \sqrt{n+1},
\end{equation}
so in fact \eqref{gsc2} follows with $\alpha=1/2$. \qed

\bigskip

\begin{ack}
BT thanks illuminating discussions with Marek Biskup, Stefano Olla and Benedek Valk\'o, and the kind hospitality of the Mittag\,--\,Leffler Insitute, Stockholm, where part of this work was done. The work of all authors was partially supported by OTKA (Hungarian National Research Fund) grant K 60708.
\end{ack}

\end{document}